\newcommand{\re}{\mathbb{R}}
\newcommand{\N}{\mathbb{N}}
\newcommand{\eps}{\epsilon}
\newcommand{\dt}{\delta}
\def\af{\alpha}
\def\bt{\beta}
\def\rank{\mbox{rank}}
\newcommand{\Sig}{\Sigma}
\newcommand{\reff}[1]{(\ref{#1})}
\newcommand{\mc}[1]{\mathcal{#1}}
\newcommand{\mt}[1]{\mathtt{#1}}
\newcommand{\supp}[1]{\mbox{supp}(#1)}
\newcommand{\cone}[1]{\mathit{cone}(#1)}
\renewcommand{\vec}[1]{\mathit{vec}(#1)}
\newcommand{\qmod}[1]{\mbox{QM}[#1]}
\newcommand{\st}{\mathit{s.t.}}
\newcommand{\bdes}{\begin{description}}
	\newcommand{\edes}{\end{description}}
\newcommand{\bal}{\begin{align}}
\newcommand{\eal}{\end{align}}
\newcommand{\bnum}{\begin{enumerate}}
	\newcommand{\enum}{\end{enumerate}}
\newcommand{\bit}{\begin{itemize}}
	\newcommand{\eit}{\end{itemize}}
\newcommand{\bea}{\begin{eqnarray}}
\newcommand{\eea}{\end{eqnarray}}
\newcommand{\be}{\begin{equation}}
\newcommand{\ee}{\end{equation}}
\newcommand{\baray}{\begin{array}}
	\newcommand{\earay}{\end{array}}
\newcommand{\bsry}{\begin{subarray}}
	\newcommand{\esry}{\end{subarray}}
\newcommand{\bca}{\begin{cases}}
	\newcommand{\eca}{\end{cases}}
\newcommand{\bcen}{\begin{center}}
	\newcommand{\ecen}{\end{center}}
\newcommand{\bbm}{\begin{bmatrix}}
	\newcommand{\ebm}{\end{bmatrix}}
\newcommand{\bmx}{\begin{matrix}}
	\newcommand{\emx}{\end{matrix}}
\newcommand{\bpm}{\begin{pmatrix}}
	\newcommand{\epm}{\end{pmatrix}}
\newcommand{\btab}{\begin{tabular}}
	\newcommand{\etab}{\end{tabular}}
\newtheorem{theorem}{Theorem}[section]
\theoremstyle{definition}
\newtheorem{example}[theorem]{Example}
\newtheorem{exm}[theorem]{Example}
\newtheorem{alg}[theorem]{Algorithm}
\newtheorem{remark}[theorem]{Remark}
\numberwithin{equation}{section}
\begin{document}

\title[Distributionally Robust Optimization with Moment Ambiguity]
{Distributionally Robust Optimization with Moment Ambiguity Sets}

\author[Jiawang Nie]{Jiawang~Nie}
\address{Jiawang Nie, Department of Mathematics,
University of California San Diego,
9500 Gilman Drive, La Jolla, CA, USA, 92093.}
\email{njw@math.ucsd.edu}

\author[Liu Yang]{Liu~Yang}

\author[Suhan Zhong]{Suhan~Zhong}
\address{Suhan Zhong, Department of Mathematics,
Texas A\&M University, College Station, TX, USA, 77843-3368.}
\email{suzhong@tamu.edu}

\author[Guangming Zhou]{Guangming~Zhou}
\address{Liu Yang, Guangming Zhou,
School of Mathematics and Computational Sciences,
Xiangtan University, Xiangtan, Hunan, China, 411105.}
\email{yangl410@xtu.edu.cn, zhougm@xtu.edu.cn}

\subjclass[2020]{90C23, 90C15, 90C22}

\keywords{distributionally robust optimization, ambiguity set,
moment, polynomial, Moment-SOS relaxation}

\begin{abstract}
This paper studies distributionally robust optimization (DRO)
when the ambiguity set
is given by moments for the distributions.
The objective and constraints are given by polynomials in decision variables.
We reformulate the DRO with equivalent moment conic constraints.
Under some general assumptions,
we prove the DRO is equivalent to a linear optimization problem
with moment and psd polynomial cones.
A Moment-SOS relaxation method is proposed to solve it.
Its asymptotic and finite convergence are shown under certain assumptions.
Numerical examples are presented to show how to solve DRO problems.
\end{abstract}

\maketitle

\section{Introduction}

Many decision problems are involved with uncertainties.
People often like to make a decision that works well with uncertain data.
The distributionally robust optimization (DRO)
is a frequently used model for this kind of decision problems.
A typical DRO problem is
\be \label{md:ecDRPO}
\min_{x\in X}\, f(x)\quad \st\quad
\inf_{\mu\in\mathcal{M}}\mathbb{E}_{\mu}[h(x,\xi)]\ge 0,
\ee
where $f:\re^n\rightarrow \re$, $h:\re^n\times \re^p\rightarrow\re$, 
$x \coloneqq (x_1, \ldots, x_n)$ is the decision variable
constrained in a set $X\subseteq\mathbb{R}^n$
and $\xi \coloneqq (\xi_1,\ldots, \xi_p) \in \mathbb{R}^p$ is the random variable
obeying the distribution of a measure $\mu \in \mc{M}$.
The notation $\mathbb{E}_{\mu}[h(x,\xi)]$ stands for the expectation of
the random function $h(x,\xi)$ with respect to the distribution of $\xi$.
The set $\mathcal{M}$ is called the {\it ambiguity set},
which is used to describe the uncertainty of the measure $\mu$.

The ambiguity set $\mathcal{M}$
is often moment-based or discrepancy-based.
For the moment-based ambiguity, the set $\mc{M}$ is usually specified by
the first, second moments \cite{DelageYeDRO,
HanasuantoDRO, WiesemannConvexDRO}.
Recently, higher order moments are also often used \cite{ZChenDROica2019, GaussianMixModelTensor,LasserreMixAmbiguity},
especially in relevant applications with machine learning.
For discrepancy-based ambiguity sets, popular examples are the
$\phi$-divergence ambiguity sets \cite{BenTaiRSO2013, DataDrivenPolyDRO}
and the Wasserstein ambiguity sets \cite{PflugAmbiguity}.
There are also some other types of ambiguity sets.
For instance, \cite{KlerkDROpolydense} assumes $\mathcal{M}$
is given by distributions with sum-of-squares (SOS)
polynomial density functions of known degrees.

We are mostly interested in Borel measures
whose supports and moments, up to a given degree $d$,
are respectively contained in given sets $S \subseteq \re^p$ and $Y\subseteq\mathbb{R}^{\binom{p+d}{d}}$.
Let $\mathcal{B}(S)$ denote the set of Borel measures supported in $S$.
We assume the ambiguity set is given as
\begin{equation}
\label{eq:mom_ambi}
\mathcal{M} \coloneqq \Big\{\mu\in \mathcal{B}(S):
\mathbb{E}_{\mu}([\xi]_d)\in Y \Big\},
\end{equation}
where $[\xi]_d$ is the monomial vector
\[
[\xi]_d \,:= \, \bbm 1 & \xi_1 & \cdots & \xi_p & \xi_1^2 & \xi_1\xi_2
   & \cdots & \xi_p^d \ebm^T.
\]
The problem (\ref{md:ecDRPO}) equipped with the above ambiguity set
is called the distributionally robust optimization of moment (DROM).
When all the defining functions are polynomials,
the DROM is an important class of distributionally robust optimization.
It has broad applications.
Polynomial and moment optimization are studied extensively \cite{JLasserrePolyMoment,LaurentSOSmom2009,JNieFlatTruncation,JNieLinearOptMoment}.
This paper studies how to solve DROM in the form \reff{md:ecDRPO}
by using Moment and SOS relaxations
(see the preliminary section for a brief review of them).
Currently, there exists relatively few work on this topic.

Solving DROM is of broad interests recently.
It is studied in \cite{KlerkDROpolydense,DataDrivenPolyDRO}
when the density functions are given by polynomials.
Polynomial and moment optimization are studied extensively \cite{JLasserrePolyMoment,LaurentSOSmom2009,JNieFlatTruncation,JNieLinearOptMoment}.
In this paper, we study how to solve DROM in the form \reff{md:ecDRPO}
by using Moment-SOS relaxations.
Currently, there exists relatively less work on this topic.

We remark that the distributionally robust min-max optimization
\be \label{md:minmax}
\min_{x\in X}\,\max_{\mu\in\mathcal{\mathcal{M}}}\mathbb{E}_{\mu}[F(x,\xi)]
\ee
is a special case of the distributionally robust optimization
in the form (\ref{md:ecDRPO}). Assume each $\mu\in\mathcal{M}$ is a probability measure
(i.e., $\mathbb{E}_{\mu}[1] = 1$), then the min-max optimization
\reff{md:minmax} is equivalent to
\be
\min_{(x, x_0) \in X \times \re}\, x_0 \quad \st\quad \inf_{\mu\in\mathcal{M}}\mathbb{E}_{\mu}[x_0 -F(x,\xi)]\ge 0 .
\ee
This is a distributionally robust optimization problem in the form (\ref{md:ecDRPO}).

The distributionally robust optimization is frequently used
to model uncertainties in various applications.
It is closely related to stochastic optimization and robust optimization.
Under certain conditions,
the DRO can be transformed into other two kinds of problems.
In stochastic optimization
(see \cite{IntrotoSP,SAforSPLan,LanBk20,RusSha03,StochasticOpt}),
people often need to solve decision problems
such that the true distributions can be well approximated by sampling.
The performance of computed solutions heavily relies on the quality of sampling.
In order to get more stable solutions,
regularization terms can be added to the optimization
(see \cite{NieSPO,SOstability,DRSVI}).
In robust optimization (see\cite{RobustConvexOpt,TheoApplRobustOpt}),
the uncertainty is often assumed to be freely distributed in some sets.
This approach is often computationally tractable and suitable for large-scale data.
However, it may produce too pessimistic decisions for certain applications.
Combining these two approaches may give more reasonable decisions sometimes.
Some information of random variables may be well estimated,
or even exactly generated from the sampling or historic data.
For instance, people may know the support of the measure,
discrepancy from a reference distribution, or its descriptive statistics.
The ambiguity set can be given as a collection of measures satisfying such properties.
It contains some exact information of distributions, as well as some uncertainties.
For decision problems with ambiguity sets, it is naturally to find
optimal decisions that work well under uncertainties.
This gives rise to distributionally robust optimization like (\ref{md:ecDRPO}).
%

We refer to \cite{twostageDRO,KlerkDROpolydense,PostekROwSC,
MomentDROXu,JZhangDRO, DRSOzhang,ZymlerDRchanceMom}
for recent work on distributionally robust optimization. 
For the min-max robust optimization (\ref{md:minmax}),
we refer to \cite{DelageYeDRO,ShapiroAhmedminmaxSO,WiesemannConvexDRO}.
The distributionally robust optimization has broad applications, e.g.,
portfolio management \cite{DelageYeDRO,EsfahaniDDdro,SZCVaRRobustPortfolio},
network design \cite{DataDrivenPolyDRO,YYDROnetwork},
inventory problems \cite{DBAdaptiveDRO,WiesemannConvexDRO}
and machine learning \cite{JDLearningModelDRO,GurRusZhu20,AgnosticFedLearning}.
For more general work on distributonally robust optimization,
we refer to the survey \cite{Rahimiansurvey} and the references therein.

\subsection*{Contributions}

This article studies the distributionally robust optimization (\ref{md:ecDRPO})
with a moment ambiguity set $\mc{M}$ as in (\ref{eq:mom_ambi}).
Assume the measure support set $S$ is a semi-algebraic set given by a tuple $g \coloneqq (g_1,\ldots,g_{m_1})$ of polynomials in $\xi$.
Similarly, assume the feasible set $X$
is given by a polynomial tuple $c \coloneqq (c_1,\ldots,c_{m_2})$ in $x$.
We consider the case that
the objective $f(x)$ is a polynomial in $x$, constrained in a set $X \subseteq \re^n$,
and that the function $h(x,\xi)$ is polynomial in the random variable
$\xi$ and is linear in $x$. The function $h(x,\xi)$ can be written as
\be
h(x,\xi) \,  \coloneqq  \sum_{
\substack{ \af  \coloneqq  (\af_1, \ldots, \af_p )  \\  \af_1 + \cdots + \af_p  \le d }  }
h_\af(x) \cdot  \xi_1^{\af_1} \cdots \xi_{p}^{\af_p} ,
\ee
where each coefficient $h_\af(x)$ is a linear function in $x$.
The total degree in $\xi  \coloneqq  (\xi_1, \ldots, \xi_p)$ is at most $d$.
For neatness, we also write that
\be  \label{eq:linear_h}
h(x,\xi) \,= \, (Ax+b)^T[\xi]_d,
\ee
for a given matrix $A$ and vector $b$.
Recall that $\mathcal{M}$ has the expression (\ref{eq:mom_ambi}).
It is clear that the set $\mathcal{M}$
consists of truncated moment sequences (tms)
\[
y \,  \coloneqq  \, (y_\af), \quad \mbox{where} \quad
\af  \coloneqq  (\af_1, \ldots, \af_p ), \, |\af| \coloneqq \af_1 + \cdots + \af_p  \le d ,
\]
such that the moment vector $y = \int [\xi]_d \mt{d} \mu$
is contained in a given set $Y$.
In this paper, we focus on the case that $S$ is compact
and that $Y$ is a set whose conic hull $\cone{Y}$
can be represented by linear, second order or semidefinite conic inequalities.
For convenience, define the conic hull of moments
\begin{equation}
K \,  \coloneqq  \, cone( \{ \mathbb{E}_{\mu}([\xi]_d) : \mu\in \mathcal{M} \} ) .
\end{equation}
Note that $K$ can also be expressed with $cone(Y)$;
see (\ref{eq:KwconeY}). The constraint in \reff{md:ecDRPO} is the same as
\[
(Ax+b)^T y \ge 0 \quad \forall \, y \in K.
\]
Let $K^*$ denote the dual cone of $K$,
then the above is equivalent to $Ax+b \in K^*$.
Therefore, the problem (\ref{md:ecDRPO}) can be equivalently reformulated as
\be  \label{eq:DRPOequiv}
\min_{x\in X}\,f(x)\quad \st\quad Ax+b\in K^*.
\ee
The moment constraining cone $K$ and its dual cone $K^*$
are typically difficult to describe computationally.
However, they can be successfully solved by Moment-SOS relaxations
(see \cite{JNieAtruncated,JNieLinearOptMoment}).

A particularly interesting case is that $\xi$ is a univariate random variable,
i.e., $p=1$. For this case, the dual cone $K^*$
can be exactly represented by semidefinite programming constraints.
For instance, if $d=4$, $Y$ is the hypercube $[0,1]^5$ and $S=[a_1,a_2]$,
then $cone(Y)$ is the nonnegative orthant and
the cone $K$ can be expressed by the constraints
\[
\bbm y_0 & y_1 & y_2 \\ y_1 & y_2 & y_3 \\ y_2 & y_3 & y_4 \ebm \succeq 0, \quad
(a_1+a_2) \bbm y_1 & y_2  \\ y_2 & y_3  \ebm \succeq
a_1 a_2 \bbm y_0 & y_1   \\ y_1 & y_2   \ebm  +
\bbm   y_2 & y_3 \\  y_3 & y_4 \ebm,
\]
\[
(y_0,y_1,y_2,y_3,y_4) \, \ge 0.
\]
In the above, $X_1 \succeq X_2$ means that $X_1-X_2$
is a positive semidefinite (psd) matrix.
The dual cone $K^*$ can be given by
semidefinite programming constraints dual to the above.
The proof for such expression is shown in Theorem \ref{thm:UTMPconv}.

For the case that $\xi$ is multi-variate, i.e., $p>1$,
there typically do not exist explicit semidefinite programming representations
for the cone $K$ and its dual cone $K^*$.
However, they can be approximated efficiently by Moment-SOS relaxations
(see \cite{JNieAtruncated,JNieLinearOptMoment}).

This paper studies how to solve the equivalent optimization problem
\reff{eq:DRPOequiv} by Moment-SOS relaxations. In computation,
the cone of $Y$ is usually expressed as a Cartesian product of
linear, second order, or semidefinite conic constraints.
A hierarchy of Moment-SOS relaxations is proposed 
to solve \reff{eq:DRPOequiv} globally,
which is equivalent to the distributionally robust optimization \reff{md:ecDRPO}.
It is worthy to note that our convex relaxations
use both ``moment'' and ``SOS'' relaxation techniques,
which are different from the classic work of polynomial optimization
and DROM problems.
In most prior work, usually one of moment and SOS relaxation
is used, but rarely two are used simultaneously.
Under some general assumptions (e.g., the compactness or archimedeanness),
we prove the asymptotic and finite convergence of the proposed Moment-SOS method.
The property of finite convergence makes
our method very attractive for solving DROM.
To check whether a Moment-SOS relaxation is tight or not, one can solve an
$\mathcal{A}$-truncated moment problem with the method in \cite{JNieAtruncated}.
By doing so, we not only compute the optimal values
and optimizers of \reff{eq:DRPOequiv},
but also obtain a measure $\mu$
that achieves the worst case expectation constraint.
This is a major advantage that most other methods do not own.
In summary, our main contributions are:
\begin{itemize}

\item
We consider the new class of distributionally robust optimization problems
in the form (\ref{md:ecDRPO}),  which are given by
polynomial functions and moment ambiguity sets.
The Moment-SOS relaxation method
is proposed to solve them globally.
It has more attractive properties than prior existing methods.
Numerical examples are given to show the efficiency.

\item
When the objective $f(x)$ and the constraining set $X$
are given by SOS-convex polynomials,
we prove the DROM is equivalent to a linear conic optimization problem.

\item
Under some general assumptions,
we prove the asymptotic and finite convergence of the proposed method.
There is little prior work on finite convergence for solving DROM.
In particular, when the random variable $\xi$ is univariate,
we show that the lowest order Moment-SOS relaxation is
sufficient for solving \reff{eq:DRPOequiv} exactly.

\item We also show how to obtain the measure $\mu^*$
that achieves the worst case expectation constraint.

\end{itemize}

The rest of the paper is organized as follows.
Section~\ref{sec:prelim} reviews some preliminary results
about moment and polynomial optimization.
In Section~\ref{sec:SolveDRPO}, we give an equivalent reformulation of
the distributionally robust optimization,
expressing it as a linear conic optimization problem.
In Section~\ref{sec:Moment-SOS}, we give an algorithm of Moment-SOS relaxations
to solve \reff{eq:DRPOequiv}. Some numerical experiments and applications are given in Section~\ref{sec:numerical}.
Finally, we make some conclusions and discussions in Section~\ref{sec:con}.

\section{Preliminaries}\label{sec:prelim}
\subsection*{Notation}

The symbol $\mathbb{R}$ (resp., $\mathbb{R}_+$,  $\mathbb{N}$)
denotes the set of real numbers
(resp.,  nonnegative real numbers, nonnegative integers).
For $t\in \mathbb{R}$, $\lceil t\rceil$
denotes the smallest integer that is greater or equal to $t$.
For an integer $k>0$, $[k] \coloneqq \{1,\cdots,k\}$.
The symbol $\mathbb{N}^n$ (resp., $\mathbb{R}^n$) stands for the set of
$n$-dimensional vectors with entries in $\mathbb{N}$ (resp., $\mathbb{R}$).
For a vector $v$, we use $\|v\|$ to denote its Euclidean norm.
The superscript $^T$ denotes the transpose of a matric or vector.
For a set $S$, the notation $\mathcal{B}(S)$ denotes the set of
Borel measures whose supports are contained in $S$.
For two sets $S_1,S_2$, the operation
\[ S_1+S_2 \,  \coloneqq  \, \{s_1+s_2: \, s_1\in S_1, \, s_2\in S_2\}
\]
is the Minkowski sum. The symbol $e$ stands for the vector of all ones
and $e_i$ stands for the $i$th standard unit vector, i.e.,
its $i$th entry is $1$ and all other entries are zeros.
We use $I_n$ to denote the $n$-by-$n$ identity matrix.
%
%
A symmetric matrix $W$ is positive semidefinite (psd) if
$v^TWv\geq 0$ for all $v\in \mathbb{R}^n$.
We write $W\succeq 0$ to mean that $W$ is psd.
The strict inequality $W \succ 0$ means that $W$ is positive definite.

The symbol $\mathbb{R}[x]  \coloneqq  \mathbb{R}[x_1,\cdots,x_n]$
denotes the ring of polynomials in $x$ with real coefficients,
and $\mathbb{R}[x]_d$ is the subset of $\mathbb{R}[x]$
with polynomials of degrees at most $d$.
For a polynomial $f\in \mathbb{R}[x]$, we use $\deg(f)$ to denote its degree.
For a tuple $f = (f_1,\ldots,f_r)$ of polynomials,
the $\deg(f)$ denotes the highest degree of $f_i$.
For a polynomial $p(x)$,  $\vec{p}$ is the coefficient vector of $p$.
For $\af  \coloneqq  (\af_1, \ldots, \af_n)$ and $x \coloneqq  (x_1, \ldots, x_n)$, we denote that
\[
x^\af \, \coloneqq  \, x_1^{\af_1} \cdots  x_n^{\af_n} , \quad
|\af| \, \coloneqq \, \af_1 + \cdots + \af_n.
\]
For a degree $d$, denote the power set
\[
\N_d^n \, \coloneqq  \, \{ \af \in \N^n: \, |\af| \le d \}.
\]
Let $[x]_d$ denote the vector of all monomials
in $x$ that have degrees at most $d$, i.e.,
\[
[x]_d \, \coloneqq  \, \bbm 1 & x_1 & \cdots & x_n & x_1^2 & x_1x_2
   & \cdots & x_n^d \ebm^T.
\]
The notation $\xi^\af$ and $[\xi]_d$ are similarly defined for
$\xi  \coloneqq  (\xi_1, \ldots, \xi_p)$.
The notation $\mathbb{E}_{\mu}[h(\xi)]$ denotes the expectation
of the random function $h(\xi)$ with respect to $\mu$
for the random variable $\xi$. The Dirac measure,
which is supported at a point $u$, is denoted as $\delta_u$.

Let $V$ be a vector space over the real field $\mathbb{R}$.
A set $C \subseteq V$ is a cone if $a x\in C$
for all $x\in C$ and $a>0$.
%
%
For a set $X\subset V$, we denote its closure by $\overline{X}$
in the Euclidean topology. Its conic hull,
which is the minimum convex cone containing $X$, is denoted as $\cone{X}$.
The dual cone of the set $X$ is
\be \label{def:X*}
X^* \,  \coloneqq  \, \{ \ell \in V^*|\, \ell(x)  \ge 0,\,\forall x\in X\},
\ee
where $V^*$ is the dual space of $V$
(i.e., the space of linear functionals on $V$).
Note that $X^*$ is a closed convex cone for all $X$.
For two nonempty sets $X_1, X_2 \in V$, we have
$(X_1+X_2)^*=X_1^*\cap X_2^*$.
When $X_1+X_2$ is a closed convex cone, we also have
$( X_1^*\cap X_2^* )^* = X_1 + X_2$.

In the following, we review some basics in optimization about
polynomials and moments. We refer to
\cite{MomentSOShierarchy,JLasserrePolyMoment,LasserreBook2009,
LaurentSOSmom2009,JNieOptCondition,NieLoc}
for more details about this topic.

\subsection{SOS and nonnegative polynomials}

A polynomial $f\in\mathbb{R}[x]$ is said to be SOS  if
$ f = f_1^2+\cdots+f_k^2$ for some real polynomials $f_i \in \mathbb{R}[x]$.
We use $\Sigma[x]$ to denote the cone of all SOS polynomials in $x$.
The $d$th degree truncation of the SOS cone $\Sigma[x]$ is
\[
\Sigma[x]_d \, \coloneqq  \, \Sigma[x]\cap\mathbb{R}[x]_d.
\]
It is a closed convex cone for each degree $d$.
For a polynomial $f\in\mathbb{R}[x]$, the membership $f\in\Sigma[x]$
can be checked by solving semidefinite programs
\cite{JLasserrePolyMoment,LaurentSOSmom2009}.
In particular, $f$ is said to be SOS-convex \cite{HN10}
if its Hessian matrix $\nabla^2f(x)$ is SOS, i.e.,
$\nabla^2 f=V(x)^T V(x)$ for a matrix polynomial $V(x)$.
%
%

In this paper, we also need to work with polynomials in $\xi  \coloneqq  (\xi_1, \ldots, \xi_p)$.
For a tuple $g \coloneqq (g_1,\ldots,g_{m_1})$ of polynomials in $\xi$,
its quadratic module is the set
\[
\qmod{g} \,  \coloneqq  \,  \Sigma[\xi]+g_1\cdot\Sigma[\xi]+\cdots+g_{m_1} \cdot\Sigma[\xi] .
\]
The $d$th degree truncation of $\qmod{g}$ is
\[
\qmod{g}_{d} \, \coloneqq  \, \Sigma[\xi]_{d}+g_1\cdot\Sigma[\xi]_{d-\deg(g_1)}+\cdots+
g_{m_1} \cdot\Sigma[\xi]_{d-\deg(g_{m_1})}.
\]
Let $S =\{ \xi \in \re^p : g(\xi)\ge 0\}$ be the set determined by $g$
and let $\mathscr{P}(S)$ denote the set of polynomials that are nonnegative on $S$.
We also frequently use the $d$th degree truncation
\[
\mathscr{P}_d(S) \,  \coloneqq  \, \mathscr{P}(S) \cap \re[\xi]_d.
\]
Then it holds that for all degree $d$
\[
\qmod{g}_{d}  \, \subseteq \, \mathscr{P}_d(S).
\]
The quadratic module $\qmod{g}$ is said to be {\it archimedean}
if there exists a polynomial $\phi \in \qmod{g}$ such that
$\{\xi \in \re^p: \phi(\xi) \ge 0\}$ is compact.
If $\qmod{g}$ is archimedean, then $S$ must be a compact set.
The converse is not necessarily true. However, for compact $S$,
the quadratic module $\qmod{\tilde{g}}$ is archimedean
if $g$ is replaced by $\tilde{g}  \coloneqq  (g, N-\| \xi \|^2)$ for $N$ sufficiently large.
When $\qmod{g}$ is archimedean, if a polynomial $h>0$ on $S$,
then we have $h \in \qmod{g}$ (see \cite{PutinarPositive}).
Furthermore, under some classical optimality conditions,
we have $h \in \qmod{g}$ if $h \ge 0$ on $S$ (see \cite{JNieOptCondition}).

\subsection{Truncated Moment Problems}
\label{ssc:LoptMom}

For the variable $\xi\in\mathbb{R}^p$,
the space of truncated multi-sequences (tms) of degree $d$  is
\[
\mathbb{R}^{\N_d^p}  \coloneqq  \big \{ z = (z_{\alpha})_{\af \in\N_d^p}:
   z_{\alpha}\in\mathbb{R} \big\}.
\]
Each $z \in\mathbb{R}^{\N_d^p}$ determines the linear Riesz functional
$\mathscr{L}_z$ on $\mathbb{R}[\xi]_d$ such that
\begin{equation}
\mathscr{L}_z \Big(\sum_{\alpha\in\N_d^p}
h_{\alpha} \xi^{\alpha} \Big) \, \coloneqq  \,
\sum_{\alpha \in \N_d^p} h_{\alpha} z_{\alpha}.
\end{equation}
For convenience of notation, we also write that
\be \label{def:<q,z>}
\langle q, z\rangle \,  \coloneqq  \,
\mathscr{L}_z(q),\quad q \in \mathbb{R}[\xi]_d.
\ee
For a polynomial $q \in \re[\xi]_{2d}$ and a tms
$z \in\mathbb{R}^{\N_{2k}^p}$, with $k \ge d$,
the $k$th order {\it localizing matrix}  $L_q^{(d)}[z]$ is such that
\be \label{def:localizing_matrix}
\vec{a}^T\left(L_q^{(k)}[z]\right) \vec{b} = \mathscr{L}_z(qab)
\ee
for all $a,b\in\mathbb{R}[\xi]_s$, where $s = k-\lceil \deg(q)/2\rceil$.
In particular, for $q=1$ (the constant one polynomial),
the $L_1^{(k)}[z]$ becomes the so-called {\it moment matrix}
\begin{equation}
\label{def:moment_matrix}
M_k[z] \, \coloneqq  \, L_1^{(k)}[z].
\end{equation}

We can use the moment matrix and localizing matrices to describe dual cones
of quadratic modules. For a polynomial tuple $g=(g_1,\ldots, g_{m_1})$
with $\deg(g) \le 2k$, define the tms cone
\begin{equation}
\label{def:Jg_2d}
\mathscr{S}[g]_{2k} \,  \coloneqq  \left\{ z \in \re^{ \N_{2k}^p } : \,
M_k[z]\succeq 0,\, L_{g_1}^{(k)}[z]\succeq 0, \ldots,
L_{ g_{m_1} }^{(k)}[z]\succeq 0
\right\}.
\end{equation}
It can be verified that (see \cite{JNieLinearOptMoment})
\begin{equation}
\label{eq:dualQMnS}
(\qmod{g}_{2k})^*=\mathscr{S}[g]_{2k} .
\end{equation}
A tms $z = (z_\af) \in \re^{\N_d^p}$ is said to admit a representing measure $\mu$
supported in a set $S \subseteq \re^p$ if
$z_{\alpha} = \int  \xi^{\alpha} \mathtt{d} \mu$ for all
$\af \in \N_d^p$. Such a measure $\mu$ is called an
$S$-representing measure for $z$.
In particular, if $z=0$ is the zero tms, then it admits
the identically zero measure.
Denote by $meas(z,S)$ the set of $S$-measures admitted by $z$.
This gives the moment cone
\be \label{def:moment_cone}
\mathscr{R}_d(S) \, \coloneqq  \, \{z \in\mathbb{R}^{\mathbb{N}_d^p} \mid
meas(z,S)\not=\emptyset\}.
\ee
It is interesting to note that $\mathscr{R}_d(S)$
can also be written as the conic hull
\be
\mathscr{R}_d(S) \,= \, \cone{ \{[\xi]_d:  \xi \in S \} }.
\ee
Recall that $\mathscr{P}_d(S)$ denotes the cone of polynomials
in $\re[\xi]_d$ that are nonnegative on $S$.
It is a closed and convex cone.
For all $h \in\mathscr{P}_d(S)$ and $z\in\mathscr{R}_d(S)$,
it holds that for every $\mu \in meas(z,S)$,
\[
\langle h, z\rangle=\sum_{\alpha\in\mathbb{N}_d^p}
h_{\alpha} z_{\alpha} \, = \, \int h(\xi ) \mt{d}\mu \ge 0.
\]
This implies that $\mathscr{R}_d(S)^* = \mathscr{P}_d(S)$.
When $S$ is compact, we also have  $\mathcal{P}_d(S)^* = \mathscr{R}_d(S)$.
If $S$ is not compact, then
\be \label{dual:MP}
\mathscr{P}_d(S)^* \,= \, \overline{ \mathscr{R}_d(S) }.
\ee
We refer to 
\cite[Section 5.2]{LaurentSOSmom2009} and \cite{JNieLinearOptMoment} for this fact.
%
%

A frequent case is that $S = \{ \xi: g(\xi) \ge 0\}$
is determined by a polynomial tuple $g= (g_1,\ldots, g_{m_1})$.
For an integer $k\ge \deg(g)/2$, a tms $z\in\mathbb{R}^{\mathbb{N}_{2k}^p}$
admits an $S$-representing measure $\mu$ if
$z\in\mathscr{S}[g]_{2k}$ and
\be  \label{def:flat_extension}
\rank\,M_{k-d_0}[z]  \,= \, \rank\,M_k[z],
\ee
where $d_0 = \lceil \deg(g)/2\rceil$.
Moreover, the measure $\mu$ is unique and is $r$-atomic,
i.e., $|\supp{\mu}| = r$, where $r =  \rank\,M_k[z]$.
The above rank condition is called {\it flat extension} or {\it flat truncation}
\cite{CurtoUTMP,JNieFlatTruncation}.
When it holds, the tms $z$ is said to be a {\it flat} tms.
When $z$ is flat, one can obtain the unique representing measure $\mu$
for $z$ by computing Schur decompositions and eigenvalues
(see \cite{ExtractSolGloptipoly}).

To obtain a representing measure for a tms $y\in\re^{ \N_d^p }$ that is not flat,
a semidefinite relaxation method is proposed in \cite{JNieAtruncated}.
Suppose $S$ is compact and the quadratic module $\qmod{g}$ is archimedean.
Select a generic polynomial $R \in \Sig[\xi]_{2k}$,
with $2k > \deg(g)$, and then solve the moment optimization
\be  \label{eq:ATMPrelx}
\left\{ \baray{cl}
\min\limits_{\omega}  & \langle R, \omega\rangle \\
\st &  \omega|_d = y, \,   \omega \in \mathscr{S}[g]_{2k}.
\earay \right.
\ee
In the above $\omega|_d$ denotes the $d$th degree truncation of $\omega$, i.e.,
\be \label{tmstrun:w|d}
\omega|_d \,  \coloneqq  \, (\omega_\af)_{ |\af| \le d }.
\ee
As $k$ increases, by solving \reff{eq:ATMPrelx},
one can either get a flat extension of $y$, or a certificate that
$y$ does not have any representing measure.
We refer to \cite{JNieAtruncated}
for more details about solving truncated moment problems.

\section{Moment Optimization Reformulation}
\label{sec:SolveDRPO}

In this section, we reformulate the distributionally robust optimization
equivalently as a linear conic optimization problem with moment constraints.
We consider the DROM problem
\begin{equation}
\label{md:polyDRPO}
\left\{
\begin{array}{rl}
\min\limits_{x\in\mathbb{R}^n} & f(x)\\
\st & \inf\limits_{\mu\in\mathcal{M}}\mathbb{E}_{\mu}[h(x,\xi)]\ge 0,\\
& x\in X,
\end{array}
\right.
\end{equation}
where $x$ is the decision variable constrained in a set $X \subseteq \re^n$
and $\xi \in \re^p$ is the random variable obeying the distribution of
the measure $\mu$ that belongs to the moment ambiguity set $\mc{M}$.
We assume that the objective $f(x)$ is a polynomial in $x$ and
$h(x,\xi)$ is a polynomial in $\xi$
whose coefficients are linear in $x$.
Equivalently, one can write that
\be \label{def:h(x)}
h(x,\xi)=(Ax+b)^T[\xi]_d,\quad A\in\mathbb{R}^{\binom{p+d}{d}\times n},\, b\in\mathbb{R}^{\binom{p+d}{d}}.
\ee
Suppose measures in the ambiguity set $\mathcal{M}$
have supports contained in the set
\be \label{def:set:S}
S = \{ \xi\in\mathbb{R}^p :
g_1(\xi)\ge 0,\ldots,g_{m_1}(\xi)\ge 0\},
\ee
for a given tuple $g \coloneqq (g_1,\ldots,g_{m_1})$ of polynomials in $\xi$.
The ambiguity set $\mathcal{M}$ can be expressed as
\be  \label{def:M}
\mathcal{M} \coloneqq \left\{\mu\in\mathcal{B}(S)
\left|\,
\mathbb{E}_{\mu}([\xi]_d)\in Y
\right.\right\},
\ee
where $Y$ is the constraining set for moments of $\mu$.
The set $Y$ is not necessarily closed or convex.
The closure of its conic hull is denoted as $\overline{ \cone{Y} }$.
In computation, it is often a Cartesian product of
linear, second order or semidefinite cones. 
The constraining set $X$ for $x$ is assumed to be the set
\be  \label{setX:fi(x)>=0}
X \,  \coloneqq  \, \{ x \in \re^n \mid
c_1(x) \ge 0, \ldots, c_{m_2}(x) \ge 0 \} ,
\ee
for a tuple $c=(c_1,\ldots, c_{m_2})$ of polynomials in $x$.

The DROM (\ref{md:polyDRPO}) can be equivalently reformulated
as polynomial optimization with moment conic conditions.
Observe that
\[
\inf\limits_{\mu\in\mathcal{M}}\mathbb{E}_{\mu}[h(x,\xi)] \ge 0
\, \Longleftrightarrow \,
(Ax+b)^T y \ge 0 ,\,\forall\, y\in\mathscr{R}_d(S) \cap \cone{Y} .
\]
The set $\mathscr{R}_d(S)$ is the moment cone defined as in (\ref{def:moment_cone}).
It consists of degree-$d$ tms' admitting $S$-measures.
For convenience, we denote the intersection
\be
\label{eq:KwconeY}
K \, = \, \mathscr{R}_d(S) \cap \cone{Y} .
\ee
Therefore, we get that
\be  \label{equiv:WCECnDC}
\inf_{\mu\in\mathcal{M}}\mathbb{E}_{\mu}[h(x,\xi)]\ge 0
\,\Longleftrightarrow\,Ax+b\in K^*,
\ee
where $K^*$ denotes the dual cone of $K$.
In view of \reff{def:X*} and \reff{def:<q,z>},
the dual cone $Y^*$ is the following polynomial cone
\be
Y^* = \{\phi \in \re[\xi]_d: \langle \phi, z \rangle
\ge 0, \, \forall \, z \in Y \}.
\ee
Observe the dual cone relations
\[
\mathscr{R}_d(S)^* = \mathscr{P}_d(S), \quad
\mathscr{P}_d(S)^* = \overline{  \mathscr{R}_d(S) } ,
\]
\[
\Big( \mathscr{P}_d(S) + Y^* \Big)^*\,= \,
 \overline{  \mathscr{R}_d(S) }  \cap  \overline{ \cone{Y} }.
\]
When both $\mathscr{R}_d(S)$ and $\cone{Y}$ are closed, we have
\be \label{closure=cap}
\overline{  \mathscr{R}_d(S) \cap \cone{Y} }  \quad = \quad
\overline{  \mathscr{R}_d(S) } \cap \overline{ \cone{Y} } .
\ee
If one of them is not closed, the above may or may not be true.
Note that $C^{**} = C$ if $C$ is a closed convex cone.
When \reff{closure=cap} holds and the sum $\mathscr{P}_d(S)+Y^*$
is a closed cone, we can express the dual cone $K^*$ as
\be  \label{def:K_y^*}
K^* \, = \, \mathscr{P}_d(S) + Y^*.
\ee
As shown in \cite[Proposition B.2.7]{LecModConOpt},
the above equality holds if $\mathscr{R}_d(S), cone(Y)$ are closed and
their interiors have non-empty intersection.
Such conditions are often satisfied for most applications.
Recall that $h(x,\xi) = (Ax+b)^T [\xi]_d$.
The membership $Ax+b \in K^*$ means that $h(x,\xi) \in K^*$.
Therefore, we get the following result.

\begin{theorem} \label{th:DRO<=>Cone}
Assume the set $X$ is given as in \reff{setX:fi(x)>=0}.
If the equality \reff{def:K_y^*} holds, then (\ref{md:polyDRPO})
is equivalent to the following optimization
\be  \label{minf0:Ax+b:in:K*}
\left\{
\baray{cl}
\min\limits_{x\in\mathbb{R}^n}\quad & f(x)\\
\st & c_1(x) \ge 0, \ldots, c_{m_2}(x) \ge 0, \\
 & h(x,\xi) \in \mathscr{P}_d(S)+Y^* .
\earay
\right.
\ee
\end{theorem}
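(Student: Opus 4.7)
The plan is to assemble the chain of equivalences developed in the paragraphs immediately preceding the statement, with the hypothesis \reff{def:K_y^*} entering only at the final translation of a conic membership into a polynomial-cone condition.

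First I would unpack the linear representation $h(x,\xi) = (Ax+b)^T [\xi]_d$ together with the definitions of $\mathcal{M}$, $\mathcal{R}_d(S)$ and $\cone{Y}$ in order to reproduce the observation
\[
\inf_{\mu\in\mathcal{M}} \mathbb{E}_\mu[h(x,\xi)] \ge 0
\;\Longleftrightarrow\; (Ax+b)^T y \ge 0 \text{ for all } y \in K
\;\Longleftrightarrow\; Ax+b \in K^*,
\]
using linearity of expectation, the fact that the set of attainable moment vectors is $\mathcal{R}_d(S)\cap Y$, positive homogeneity and additivity of the linear functional $y \mapsto (Ax+b)^T y$, and the definition of the dual cone. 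The second step is to invoke the hypothesis $K^* = \mathscr{P}_d(S) + Y^*$, converting $Ax+b \in K^*$ into $Ax+b \in \mathscr{P}_d(S) + Y^*$; under the standard identification between coefficient vectors in $\re^{\binom{p+d}{d}}$ and polynomials of degree at most $d$ in $\xi$, this reads $h(x,\xi) \in \mathscr{P}_d(S) + Y^*$. Finally, the constraint $x\in X$ is rewritten as $c_1(x)\ge 0,\ldots,c_{m_2}(x)\ge 0$ by \reff{setX:fi(x)>=0}; combining these pieces gives \reff{minf0:Ax+b:in:K*}.

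The only point that requires care is the first equivalence, where one must verify that the test set $\mathcal{R}_d(S)\cap Y$ and its enlargement to $K = \mathcal{R}_d(S)\cap\cone{Y}$ impose the same constraint on $x$. The inclusion $Y\subseteq\cone{Y}$ makes one direction automatic. For the other I would exploit that $\mathcal{R}_d(S)$ is already a convex cone, together with the positive homogeneity and additivity of $(Ax+b)^T y$ in $y$, so that $(Ax+b)^T y\ge 0$ propagates from $\mathcal{R}_d(S)\cap Y$ to every conic combination that still lies in $\mathcal{R}_d(S)$. Every other step of the argument is a routine definitional unwinding.
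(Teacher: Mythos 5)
Your overall route is the same as the paper's: Theorem~\ref{th:DRO<=>Cone} is established there by the chain of observations in the paragraphs immediately preceding it (unwind $h(x,\xi)=(Ax+b)^T[\xi]_d$, identify the attainable moment vectors, pass to $Ax+b\in K^*$, substitute the hypothesis \reff{def:K_y^*}, and rewrite $x\in X$ via the $c_i$), and your first and last steps reproduce that chain faithfully.

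The problem is the one step you yourself flag as requiring care, and your proposed fix for it does not work. The set of moment vectors realized by $\mathcal{M}$ is $\mathcal{R}_d(S)\cap Y$, and linearity of $y\mapsto(Ax+b)^Ty$ lets you propagate nonnegativity from this set to its conic hull $\cone{\mathcal{R}_d(S)\cap Y}$. But the cone you need is $K=\mathcal{R}_d(S)\cap\cone{Y}$, and in general $\cone{\mathcal{R}_d(S)\cap Y}$ is a \emph{proper} subset of $\mathcal{R}_d(S)\cap\cone{Y}$: a conic combination $\sum_i\lambda_i y_i$ of points $y_i\in Y$ can land in $\mathcal{R}_d(S)$ even though no individual $y_i$ lies there, and for such a point your propagation argument says nothing about the sign of $(Ax+b)^T\big(\sum_i\lambda_i y_i\big)$. (Toy instance: take $\mathcal{R}=\mathbb{R}_+^2$ in the role of $\mathcal{R}_d(S)$, $Y=\{(1,1),(1,-1/2)\}$, and $\ell(s,t)=-s+2t$; then $\ell\ge 0$ on $\mathcal{R}\cap Y=\{(1,1)\}$ but $\ell<0$ at $(2,1/2)=(1,1)+(1,-1/2)\in\mathcal{R}\cap\cone{Y}$.) So the constraint $\inf_{\mu\in\mathcal{M}}\mathbb{E}_\mu[h(x,\xi)]\ge 0$ is a priori weaker than $Ax+b\in K^*$, and your argument does not close that gap. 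To be fair, the paper asserts this same equivalence with an unqualified ``Observe that'' and never justifies it either; closing it genuinely requires an extra structural assumption (for instance that $Y$ is itself a convex cone, or that $\cone{\mathcal{R}_d(S)\cap Y}=\mathcal{R}_d(S)\cap\cone{Y}$), not the homogeneity-and-additivity argument you propose.
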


The membership constraint in \reff{minf0:Ax+b:in:K*}
means that $h(x,\xi)$, as a polynomial in $\xi$,
is the sum of a polynomial in $\mathscr{P}_d(S)$ and a polynomial in $Y^*$.
When $f, c_1, \ldots, c_{m_2}$ are all linear functions,
\reff{minf0:Ax+b:in:K*} is a linear conic optimization problem.
When $f$ and every $c_i$ are polynomials,
we can apply Moment-SOS relaxations to solve it.

Recall that $X$ is the set given as in \reff{setX:fi(x)>=0}.
Denote the degree
\[
d_1 \, \coloneqq  \, \max \{ \deg(f)/2, \lceil \deg(c)/2 \rceil \}.
\]
Observe that for all $x\in X$ and $w = [x]_{2d_1}$, it holds that
\[
\begin{array}{l}
\langle f, w \rangle = f(x), \,
M_{d_1}[w] \succeq 0, \\
L_{c_i}^{(d_1)}[w] \succeq 0, \,  i = 1,\ldots, m_2.
\end{array}
\]
We refer to the Subsection~\ref{ssc:LoptMom} for the above notation.
For convenience, define the projection map
$\pi : \re^{ \N^n_{2d_1} } \, \to \, \re^n$ such that
\be \label{map:pi}
\pi(w) \, \coloneqq  \, (w_{e_1}, \ldots, w_{e_n}),
\quad w \, \in \re^{ \N^n_{2d_1} }.
\ee
So, the optimization \reff{minf0:Ax+b:in:K*} can be relaxed to
\be  \label{eq:SOS:inirel}
\left\{
\baray{cl}
\min\limits_{(x,w)}\quad & \langle f,w\rangle\\
\st\quad & M_{d_1}[w]\succeq 0, \,
     L_{c_i}^{(d_1)}[w] \succeq 0 \, (i \in [m_2]),   \\
& h(x, \xi) \in\mathscr{P}_d(S)+Y^*,\\
& w_0=1,  x = \pi(w), \, w \in \re^{ \N_{2d_1}^n } .
\earay
\right.
\ee
The relaxation (\ref{eq:SOS:inirel}) is said to be {\it tight}
if it has the same optimal value as \reff{minf0:Ax+b:in:K*} does.
Under the SOS-convexity assumption, the relaxation
(\ref{eq:SOS:inirel}) is equivalent to \reff{minf0:Ax+b:in:K*}.
This is the following result.

\begin{theorem} \label{thm:SOS:rel}
Suppose the ambiguity set $\mc{M}$ is given as in \reff{def:M}
and the set $X$ is given as in \reff{setX:fi(x)>=0}. Assume
the polynomials $f, -c_1, \ldots, -c_{m_2}$ are SOS-convex.
Then, the optimization problems \reff{eq:SOS:inirel} and
\reff{minf0:Ax+b:in:K*} are equivalent in the following sense:
they have the same optimal value, and
$w^*$ is a minimizer of \reff{eq:SOS:inirel}
if and only if $x^*  \coloneqq  \pi(w^*)$ is a minimizer of \reff{minf0:Ax+b:in:K*}.
\end{theorem}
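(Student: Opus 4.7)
The plan is to prove the equivalence by exploiting the classical Jensen-type inequality for SOS-convex polynomials due to Lasserre and Helton--Nie: if $p \in \re[x]$ is SOS-convex with $\deg(p) \le 2d_1$, then for every $w \in \re^{\N_{2d_1}^n}$ with $w_0 = 1$ and $M_{d_1}[w] \succeq 0$, one has $p(\pi(w)) \le \langle p, w \rangle$. This fact is the central tool: it converts moment-level inequalities back to pointwise statements at $x^* := \pi(w^*)$.

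First I would verify that \reff{eq:SOS:inirel} is a genuine relaxation of \reff{minf0:Ax+b:in:K*}. Given any $x$ feasible for \reff{minf0:Ax+b:in:K*}, the tms $w := [x]_{2d_1}$ is feasible for \reff{eq:SOS:inirel}: one has $w_0 = 1$, $\pi(w) = x$, the moment matrix $M_{d_1}[w] = [x]_{d_1} [x]_{d_1}^T$ is rank-one psd, and each localizing matrix $L_{c_i}^{(d_1)}[w] = c_i(x)\cdot [x]_{s_i}[x]_{s_i}^T$ with $s_i := d_1 - \lceil \deg(c_i)/2 \rceil$ is psd because $c_i(x) \ge 0$. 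The cone membership for $h(x,\xi)$ carries over verbatim, and $\langle f, w \rangle = f(x)$. Hence the optimal value of \reff{eq:SOS:inirel} does not exceed that of \reff{minf0:Ax+b:in:K*}.

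The reverse inequality is where SOS-convexity enters. Let $w^*$ be any feasible point of \reff{eq:SOS:inirel} and set $x^* := \pi(w^*)$. Since $L_{c_i}^{(d_1)}[w^*] \succeq 0$, its $(0,0)$ entry yields $\langle c_i, w^* \rangle \ge 0$. Applying the SOS-convex Jensen inequality to $-c_i$ gives $-c_i(x^*) \le \langle -c_i, w^* \rangle \le 0$, so $c_i(x^*) \ge 0$ and thus $x^* \in X$. The cone constraint $h(x^*,\xi) = h(\pi(w^*),\xi) \in \mathscr{P}_d(S) + Y^*$ is imposed directly in \reff{eq:SOS:inirel}, so by Theorem~\ref{th:DRO<=>Cone} the point $x^*$ is feasible for \reff{minf0:Ax+b:in:K*}. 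Applying Jensen to $f$ itself yields $f(x^*) \le \langle f, w^* \rangle$. Combined with the first direction this forces equality of optimal values and establishes the minimizer correspondence: from an optimal $w^*$ one recovers an optimal $x^* := \pi(w^*)$, and conversely from an optimal $x^*$ the tms $w^* := [x^*]_{2d_1}$ is optimal for \reff{eq:SOS:inirel}.

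The main obstacle is the careful bookkeeping of degrees needed to invoke the SOS-convex Jensen inequality: since only $M_{d_1}[w^*] \succeq 0$ is available, the inequality applies only to polynomials of degree at most $2d_1$, which is precisely why $d_1$ is defined as $\max\{\deg(f)/2, \lceil \deg(c)/2 \rceil\}$. It is worth noting that the semi-infinite constraint $h(x,\xi) \in \mathscr{P}_d(S) + Y^*$ is linear in $x$ and therefore passes through the map $x \mapsto \pi(w)$ without any convexity hypothesis; only the finite-dimensional constraints $c_i(x) \ge 0$ and the objective $f$ require SOS-convexity to survive the moment lifting.
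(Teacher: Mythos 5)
Your proposal is correct and follows essentially the same route as the paper's own proof: both directions rest on the SOS-convex Jensen inequality $f(\pi(w))\le\langle f,w\rangle$, $c_i(\pi(w))\ge\langle c_i,w\rangle$ together with the observation that $\langle c_i,w\rangle\ge 0$ is the $(1,1)$-entry of $L_{c_i}^{(d_1)}[w]\succeq 0$, which forces $\pi(w)\in X$ and closes the gap between the relaxation and the original problem. The only cosmetic difference is that you cite Theorem~\ref{th:DRO<=>Cone} for the feasibility of $x^*$ in \reff{minf0:Ax+b:in:K*}, which is unnecessary since the membership $h(x^*,\xi)\in\mathscr{P}_d(S)+Y^*$ is literally a constraint of \reff{eq:SOS:inirel}; otherwise the argument matches the paper's.
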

\begin{proof}
Let $w$ be a feasible point for (\ref{eq:SOS:inirel})
and $x = \pi(w)$, then $Ax + b \in K^*$.
Since $f, -c_1, \ldots, -c_{m_2}$ are SOS-convex,
by the Jensen's inequality (see \cite{LasserreConvexity}),
we have the inequalities
\[
f(x) = f(\pi(w)) \le \langle f, w \rangle,
\]
\[
c_i(x) = c_i(\pi(w)) \ge \langle c_i, w \rangle , \, i=1,\ldots, m_2.
\]
The $(1,1)$-entry of $L_{c_i}^{(d_1)}[w]$
is $\langle c_i, w \rangle$, so $L_{c_i}^{(d_1)}[w] \succeq 0$ implies that
$\langle c_i, w \rangle \ge 0$.
This means that $x = \pi(w) \in X$
for every $w$ that is feasible for \reff{eq:SOS:inirel}.
Let $f_0, f_1$ denote the optimal values of
\reff{minf0:Ax+b:in:K*}, \reff{eq:SOS:inirel} respectively.
Since the latter is a relaxation of the former,
it is clearly that $f_0 \ge f_1$. For every $\eps >0$, there exists a feasible $w$
such that $\langle f, w \rangle \le f_1 + \eps$,
which implies that $f(\pi(w)) \le  f_1 + \eps$.
Hence $f_0 \le f_1 + \eps$ for every $\eps > 0$.
Therefore, $f_0 = f_1$, i.e., \reff{eq:SOS:inirel} and \reff{minf0:Ax+b:in:K*}
have the same optimal value.

If $w^*$ is a minimizer of \reff{eq:SOS:inirel},
we also have $x^* = \pi(w^*) \in X$ and
\[
f(x^*) = f(\pi(w^*)) \le \langle f, w^* \rangle.
\]
Since \reff{eq:SOS:inirel} is a relaxation of \reff{minf0:Ax+b:in:K*},
they must have the same optimal value
and $x^*$ is a minimizer of \reff{minf0:Ax+b:in:K*}.
For the converse, if $x^*$ is a minimizer of \reff{minf0:Ax+b:in:K*},
then $w^*  \coloneqq  [x^*]_{2d_1}$ is feasible for \reff{eq:SOS:inirel} and
$f(x^*) = \langle f, w^* \rangle$ .
So $w^*$ must also be a minimizer of  \reff{eq:SOS:inirel},
since \reff{eq:SOS:inirel} and \reff{minf0:Ax+b:in:K*}
have the same optimal value.
\end{proof}

In the following, we derive the dual optimization of (\ref{eq:SOS:inirel}).
As in Subsection~\ref{ssc:LoptMom}, we have seen that
\[
M_{d_1}[w] \succeq 0, \,\, L_{c_i}^{(d_1)}[w]\succeq 0 \, (i \in [m_2])
\, \Longleftrightarrow\,  w\in \mathscr{S}[c]_{2d_1},
\]
where $\mathscr{S}[c]_{2d_1}$ is given similarly as in \reff{def:Jg_2d}.
Recall the dual relationship
$(\qmod{c}_{2d_1})^* = \mathscr{S}[c]_{2d_1}$,
as shown in (\ref{eq:dualQMnS}).
The Lagrange function for (\ref{eq:SOS:inirel}) is
\[
\baray{rcl}
 \mathcal{L}(w;\gamma,q,y,z)
& = &\langle f, w\rangle-\gamma(w_0-1)-\langle q,w\rangle-
     \langle y,A \pi(w) +b\rangle \\
& = & \langle f -q-y^TA x-\gamma \cdot 1, w\rangle +
         \gamma-\langle b,y\rangle ,
\earay
\]
for $\gamma \in \re, q\in \qmod{c}_{2d_1}, y \in \overline{K}$.
(Note that the cone $K$ is not necessarily closed.)
To make $\mathcal{L}(w;\gamma,q,y,z)$ have a finite infimum for
$w \in \re^{ \N_{2d_1}^n }$, we need the constraint
\[
  f- y^TA x-\gamma \,=\, q.
\]
Therefore, the dual optimization of (\ref{eq:SOS:inirel}) is
\be \label{eq:SOS:dual}
\left\{
\baray{cl}
\max\limits_{(\gamma,y)} & \gamma-\langle b,y\rangle \\
\st  & f(x) - y^TAx-\gamma\in \qmod{c}_{2d_1}, \\
     & \gamma \in \re, \, y \in \overline{K}.
\earay
\right.
\ee
The first membership in \reff{eq:SOS:dual} means that
$f(x) - y^TAx-\gamma$, as a polynomial in $x$,
belongs to the truncated quadratic module $\qmod{c}_{2d_1}$.
So it gives a constraint for both $\gamma$ and $y$.

\section{The Moment-SOS relaxation method}
\label{sec:Moment-SOS}

In this section, we give a Moment-SOS relaxation method for solving
the distributionally robust optimization and prove its convergence.

%
%
In Section~\ref{sec:SolveDRPO}, we have seen that the DROM (\ref{md:polyDRPO})
is equivalent to the linear conic optimization (\ref{eq:SOS:inirel})
under certain assumptions. It is still hard to solve (\ref{eq:SOS:inirel})
directly, due to the membership constraint $h(x,\xi) \in \mathscr{P}_d(S)+Y^*$.
This is because the nonnegative polynomial cone $\mathscr{P}_d(S)$
typically does not have an explicit computational representation.
For its dual problem (\ref{eq:SOS:dual}), it is similarly difficult
to deal with the conic membership $y \in \overline{K}$.
However, both  (\ref{eq:SOS:inirel}) and (\ref{eq:SOS:dual})
can be solved efficiently by Moment-SOS relaxations.

Recall that $S$ is a semi-algebraic set given as in \reff{def:set:S}.
For every integer $k \ge d/2$, it holds the nesting containment
\[
\qmod{g}_{2k} \cap \re[\xi]_d \subseteq
\qmod{g}_{2k+2} \cap \re[\xi]_d \subseteq \cdots\subseteq \mathscr{P}_d(S).
\]
We thus consider the following restriction of (\ref{eq:SOS:inirel}):
\be  \label{eq:SOS:Qres}
\left\{
\baray{cl}
\min\limits_{(x,w)}  & \langle f,w\rangle\\
\st  &
M_{d_1}[w]\succeq 0, \, L_{c_i}^{(d_1)}[w] \succeq 0 \, (i \in [m_2]),   \\
& h(x,\xi) \in \qmod{g}_{2k}+Y^*,\\
& w_0 = 1, x = \pi(w), w \in \re^{ \N_{2d_1}^n } .
\earay
\right.
\ee
The integer $k$ is called the relaxation order.
Since $(\qmod{g}_{2k})^*=\mathscr{S}[g]_{2k}$,
the dual optimization of (\ref{eq:SOS:Qres}) is
\be  \label{eq:SOS:Qdual}
\left\{
\baray{cl}
\max\limits_{(\gamma,y,z)}  & \gamma-\langle b,y\rangle \\
\st  & f(x) -y^TAx - \gamma \in  \qmod{c}_{2d_1},\\
& \gamma \in \re, \, z\in \mathscr{S}[g]_{2k}, \,
y \in \overline{ \cone{Y} },\,  y = z|_d .
\earay
\right.
\ee
We would like to remark that $\qmod{g}$
is a quadratic module in the polynomial ring $\re[\xi]$,
while $\qmod{c}$ is a quadratic module in the polynomial $\re[x]$.
The notation $z|_d$ denotes the degree-$d$ truncation of $z$;
see \reff{tmstrun:w|d} for its meaning.
The optimization (\ref{eq:SOS:Qdual}) is a relaxation of (\ref{eq:SOS:dual}),
since it has a bigger feasible set.
There exist both quadratic module and moment
constraints in (\ref{eq:SOS:Qdual}).
The primal-dual pair \reff{eq:SOS:Qres}-\reff{eq:SOS:Qdual}
can be solved as semidefinite programs.
The following is a basic property about the above optimization.

\begin{theorem} \label{thm:TightRel}
Assume \reff{closure=cap} holds.
Suppose $(\gamma^*,y^*,z^*)$ is an optimizer of (\ref{eq:SOS:Qdual})
for the relaxation order $k$.
Then $(\gamma^*,y^*)$ is a maximizer of (\ref{eq:SOS:dual})
if and only if it holds that
$
 y^* \in \overline{ \mathscr{R}_d(S)   } .
$
\end{theorem}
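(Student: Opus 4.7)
The plan is to exploit two facts recorded just before the theorem: that (\ref{eq:SOS:Qdual}) is a relaxation of (\ref{eq:SOS:dual}) (same objective, same $x$-side constraint, only the moment-cone side is enlarged), and that the hypothesis (\ref{closure=cap}) converts $\overline{K}$ into the simpler intersection $\overline{\mathcal{R}_d(S)} \cap \overline{\cone{Y}}$. Since the $x$-side constraint $f(x) - (y^*)^T Ax - \gamma^* \in \qmod{c}_{2d_1}$ is part of the feasibility of $(\gamma^*, y^*, z^*)$ in (\ref{eq:SOS:Qdual}), it is inherited for free when testing $(\gamma^*, y^*)$ against (\ref{eq:SOS:dual}); the only delicate item is the cone membership $y^* \in \overline{K}$, and that is precisely what the condition $y^* \in \overline{\mathcal{R}_d(S)}$ controls.

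For the \emph{only if} direction I would argue directly: if $(\gamma^*, y^*)$ is feasible for (\ref{eq:SOS:dual}), then $y^* \in \overline{K} = \overline{\mathcal{R}_d(S) \cap \cone{Y}} \subseteq \overline{\mathcal{R}_d(S)}$, with no appeal to (\ref{closure=cap}) needed at this step. For the \emph{if} direction, I would first note that (\ref{eq:SOS:Qdual}) already gives $y^* \in \overline{\cone{Y}}$; combined with the hypothesis $y^* \in \overline{\mathcal{R}_d(S)}$, the assumption (\ref{closure=cap}) then yields
\[
y^* \, \in \, \overline{\mathcal{R}_d(S)} \cap \overline{\cone{Y}} \,=\, \overline{\mathcal{R}_d(S) \cap \cone{Y}} \,=\, \overline{K},
\]
so $(\gamma^*, y^*)$ is feasible for (\ref{eq:SOS:dual}).

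To upgrade feasibility to optimality I would run a short value-sandwich. Let $v_{\mathrm{d}}$ and $v_{\mathrm{Qd}}$ denote the optimal values of (\ref{eq:SOS:dual}) and (\ref{eq:SOS:Qdual}). The relaxation property gives $v_{\mathrm{Qd}} \ge v_{\mathrm{d}}$, while feasibility of $(\gamma^*, y^*)$ in (\ref{eq:SOS:dual}) gives $v_{\mathrm{d}} \ge \gamma^* - \langle b, y^*\rangle = v_{\mathrm{Qd}}$. Equality throughout forces $(\gamma^*, y^*)$ to be a maximizer of (\ref{eq:SOS:dual}), closing the proof. The argument is genuinely short; the one point I would be careful about is to invoke (\ref{closure=cap}) in the nontrivial direction $\overline{\mathcal{R}_d(S)} \cap \overline{\cone{Y}} \subseteq \overline{\mathcal{R}_d(S) \cap \cone{Y}}$, since the reverse inclusion is the generic fact $\overline{A \cap B} \subseteq \overline{A} \cap \overline{B}$ that holds without hypotheses and by itself would not be enough for the \emph{if} direction.
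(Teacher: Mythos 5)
Your proposal is correct and follows essentially the same route as the paper's proof: the forward direction is immediate from $\overline{K}\subseteq\overline{\mathscr{R}_d(S)}$, and the converse uses \reff{closure=cap} to turn $y^*\in\overline{\mathscr{R}_d(S)}\cap\overline{\cone{Y}}$ into feasibility for \reff{eq:SOS:dual}, after which the relaxation property forces optimality. You merely spell out the value-sandwich and the role of the nontrivial inclusion in \reff{closure=cap}, which the paper leaves implicit.
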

\begin{proof}
If $(\gamma^*,y^*)$ is a maximizer of (\ref{eq:SOS:dual}),
then it is clear that $y^* \in \overline{ \mathscr{R}_d(S) }$.
Conversely, if $y^* \in \overline{ \mathscr{R}_d(S) }$, then $(\gamma^*, y^*)$
is feasible for \reff{eq:SOS:dual}, since \reff{closure=cap} holds.
Since \reff{eq:SOS:Qdual} is a relaxation of \reff{eq:SOS:dual},
we know $(\gamma^*,y^*)$ must also be a maximizer of (\ref{eq:SOS:dual}).
\end{proof}

If $\mathscr{R}_d(S)$ is a closed cone,
then we only need to check $y^* \in \mathscr{R}_d(S)$ in the above.
Interestingly, when $S$ is compact, the moment cone $\mathscr{R}_d(S)$ is closed
\cite{LasserreBook2009,LaurentSOSmom2009,JNieLinearOptMoment}.
As introduced in the Subsection~\ref{ssc:LoptMom},
the membership $y^* \in \mathscr{R}_d(S)$ can be checked
by solving a truncated moment problem.
This can be done by solving the optimization \reff{eq:ATMPrelx}
for a generically selected objective.
Once $(\gamma^*,y^*)$ is confirmed to be a maximizer of (\ref{eq:SOS:dual}),
we show how to get a minimizer for \reff{md:polyDRPO}.
This is shown as follows.

\begin{theorem} \label{thm:x*:solveDRO}
Assume \reff{closure=cap} holds.
For a relaxation order $k$, suppose $(x^*,w^*)$ is a minimizer of \reff{eq:SOS:Qres}
and $(\gamma^*,y^*,z^*)$ is a maximizer of \reff{eq:SOS:Qdual}
such that $y^* \in \overline{ \mathscr{R}_d(S) }$.
Assume there is no duality gap between
\reff{eq:SOS:Qres} and \reff{eq:SOS:Qdual}, i.e., they have the same optimal value.
If the point $x^*$ belongs to the set $X$ and $f(x^*)= \langle f, w^* \rangle$,
then $x^*$ is a minimizer of \reff{minf0:Ax+b:in:K*}.
Moreover, if in addition the dual cone $K^*$ can be expressed as in \reff{def:K_y^*},
then $x^*$ is also a minimizer of \reff{md:polyDRPO}.
\end{theorem}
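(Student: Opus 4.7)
The plan is to chain the optimal values of the primal-dual pair \reff{eq:SOS:Qres}--\reff{eq:SOS:Qdual} up through \reff{eq:SOS:dual}, \reff{eq:SOS:inirel}, and \reff{minf0:Ax+b:in:K*}, using weak duality together with the relaxation and restriction relationships already established in this section.

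First I would check that $x^*$ is feasible for \reff{minf0:Ax+b:in:K*}. By assumption $x^* \in X$, so only the membership $h(x^*, \xi) \in \mathscr{P}_d(S) + Y^*$ needs verification. Since $(x^*, w^*)$ is feasible for \reff{eq:SOS:Qres}, one already has $h(x^*, \xi) \in \qmod{g}_{2k} + Y^*$, and the trivial inclusion $\qmod{g}_{2k} \subseteq \mathscr{P}_d(S)$ closes the gap. Thus $f(x^*)$ is at least the optimal value of \reff{minf0:Ax+b:in:K*}.

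Next I would push the inequality the other way. Theorem~\ref{thm:TightRel} together with the hypothesis $y^* \in \overline{\mathscr{R}_d(S)}$ promotes $(\gamma^*, y^*)$ to a maximizer of \reff{eq:SOS:dual}, so the optimal value of \reff{eq:SOS:dual} equals $\gamma^* - \langle b, y^* \rangle$; the no-duality-gap hypothesis between \reff{eq:SOS:Qres} and \reff{eq:SOS:Qdual} then identifies this common value with $\langle f, w^* \rangle$. Weak duality between \reff{eq:SOS:inirel} and its dual \reff{eq:SOS:dual} bounds the value of \reff{eq:SOS:dual} above by the value of \reff{eq:SOS:inirel}; and since the lift $w = [x]_{2d_1}$ is always feasible for \reff{eq:SOS:inirel} whenever $x$ is feasible for \reff{minf0:Ax+b:in:K*}, the former is a relaxation of the latter, giving a second bound in the same direction. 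Combining these with the hypothesis $f(x^*) = \langle f, w^* \rangle$ produces
\begin{align*}
f(x^*) \,=\, \langle f, w^* \rangle \,\le\, \text{val}\reff{minf0:Ax+b:in:K*} \,\le\, f(x^*),
\end{align*}
so all inequalities collapse to equalities and $x^*$ is a minimizer of \reff{minf0:Ax+b:in:K*}.

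For the concluding clause, imposing \reff{def:K_y^*} activates Theorem~\ref{th:DRO<=>Cone}, which identifies \reff{minf0:Ax+b:in:K*} with the original DROM \reff{md:polyDRPO}, and the previous paragraph immediately delivers the claim. The main obstacle I anticipate is pure bookkeeping: several of the comparisons point in opposite directions (tightening $\mathscr{P}_d(S)$ to $\qmod{g}_{2k}$ shrinks the primal feasible set, whereas enlarging $\overline{\mathscr{R}_d(S)}$ to the moment-matrix cone enlarges the dual feasible set), so care is needed to orient each inequality correctly, and in particular one must not appeal to Theorem~\ref{thm:SOS:rel}'s equivalence since its SOS-convexity hypothesis is not in force here; only the one-sided bound between \reff{eq:SOS:inirel} and \reff{minf0:Ax+b:in:K*} is used.
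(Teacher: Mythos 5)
Your proposal is correct and follows essentially the same route as the paper's proof: feasibility of $x^*$ for \reff{minf0:Ax+b:in:K*} via $\qmod{g}_{2k}\subseteq\mathscr{P}_d(S)$, promotion of $(\gamma^*,y^*)$ to a maximizer of \reff{eq:SOS:dual} via Theorem~\ref{thm:TightRel}, and the sandwich of optimal values through weak duality and the relaxation/restriction chain, collapsing to $f(x^*)=\langle f,w^*\rangle$ being the optimal value. Your closing remark that only the one-sided bound (not the SOS-convex equivalence of Theorem~\ref{thm:SOS:rel}) is needed matches the paper exactly.
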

\begin{proof}
Let $f_1, f_2$ be optimal values of the optimization problems
\reff{eq:SOS:inirel} and \reff{eq:SOS:dual} respectively.
Then, by the weak duality, it holds that
\[
f_1 \ge f_2.
\]
The membership $y^* \in \overline{ \mathscr{R}_d(S) }$
implies that $(\gamma^*,y^*)$ is a maximizer of (\ref{eq:SOS:dual}),
by Theorem~\ref{thm:TightRel}. So $f_2 = \gamma^*-b^Ty^*$.
By the assumption, the primal-dual pair \reff{eq:SOS:Qres}-\reff{eq:SOS:Qdual}
have the same optimal value, so
\[
\langle f, w^*\rangle = \gamma^*-b^Ty^*  = f_2 .
\]
The constraint $h(x^*, \xi) \in \qmod{g}_{2k} + Y^*$
implies that $h(x^*, \xi) \in \mathscr{P}_d(S) + Y^*$.
Since $x^* \in X$, we know $x^*$ is a feasible point of \reff{minf0:Ax+b:in:K*}.
The optimal value of \reff{minf0:Ax+b:in:K*}
is greater than or equal to that of \reff{eq:SOS:inirel}, hence
\[
f_1 \ge f_2 = \langle f, w^*\rangle  = f(x^*) \ge f_1.
\]
So $f(x^*) = f_1$.
This implies that $x^*$ is a minimizer of \reff{minf0:Ax+b:in:K*}.
Moreover, if in addition $K^*$ can be expressed as in \reff{def:K_y^*},
the optimization \reff{md:polyDRPO} is equivalent to \reff{minf0:Ax+b:in:K*},
by Theorem~\ref{th:DRO<=>Cone}. So $x^*$ is also a minimizer of \reff{md:polyDRPO}.
\end{proof}

In the above theorem, the assumptions that $x^* \in X$ and
$f(x^*)= \langle f, w^* \rangle$ must hold
if $f,-c_1, \ldots, -c_{m_2}$ are SOS-convex polynomials.
We have the following theorem.

\begin{theorem} \label{thm:tight:SOScvx}
Assume \reff{closure=cap} holds.
For a relaxation order $k$, suppose $(x^*,w^*)$ is a minimizer of \reff{eq:SOS:Qres}
and $(\gamma^*,y^*,z^*)$ is a maximizer of \reff{eq:SOS:Qdual}
such that $y^* \in \overline{ \mathscr{R}_d(S) }$. Assume there is no duality gap between
\reff{eq:SOS:Qres} and \reff{eq:SOS:Qdual}, i.e., they have the same optimal value.
If $f,-c_1, \ldots, -c_{m_2}$ are SOS-convex polynomials,
then $x^*  \coloneqq  \pi(w^*)$  is a minimizer of \reff{minf0:Ax+b:in:K*}.
Moreover, if in addition $K^*$ can be expressed as in \reff{def:K_y^*},
then $x^*$ is also a minimizer of \reff{md:polyDRPO}.
\end{theorem}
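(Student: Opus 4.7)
The plan is to reduce this theorem to Theorem~\ref{thm:x*:solveDRO} by verifying its two remaining hypotheses, namely that $x^* \in X$ and $f(x^*) = \langle f, w^*\rangle$. The SOS-convexity assumption gives us exactly what we need via the Jensen-type inequality, essentially re-running the argument already used in the proof of Theorem~\ref{thm:SOS:rel}.

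First I would establish $x^* \in X$. Since $w^*$ is feasible for \reff{eq:SOS:Qres}, we have $L_{c_i}^{(d_1)}[w^*] \succeq 0$ for each $i \in [m_2]$. Taking the $(1,1)$-entry of each localizing matrix yields $\langle c_i, w^* \rangle \ge 0$. Because $-c_i$ is SOS-convex, the Jensen inequality of Lasserre \cite{LasserreConvexity} gives
\[
c_i(x^*) = c_i(\pi(w^*)) \ge \langle c_i, w^*\rangle \ge 0,
\]
so $x^* \in X$.

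Next I would verify $f(x^*) = \langle f, w^*\rangle$. The SOS-convexity of $f$ gives $f(x^*) \le \langle f, w^*\rangle$ by Jensen's inequality. For the reverse direction, note that \reff{eq:SOS:Qres} is a restriction of \reff{eq:SOS:inirel} (since $\qmod{g}_{2k} \subseteq \mathscr{P}_d(S)$), while \reff{eq:SOS:inirel} is itself a relaxation of \reff{minf0:Ax+b:in:K*}; consequently, if $f_1$ denotes the optimal value of \reff{minf0:Ax+b:in:K*}, we have
\[
\langle f, w^*\rangle \le f_1.
\]
On the other hand, because $x^* \in X$ and $h(x^*,\xi) \in \qmod{g}_{2k} + Y^* \subseteq \mathscr{P}_d(S) + Y^*$, the point $x^*$ is feasible for \reff{minf0:Ax+b:in:K*}, hence $f(x^*) \ge f_1$. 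Chaining these gives
\[
f(x^*) \ge f_1 \ge \langle f, w^* \rangle \ge f(x^*),
\]
so all inequalities are equalities and in particular $f(x^*) = \langle f, w^*\rangle$.

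With both hypotheses verified, Theorem~\ref{thm:x*:solveDRO} immediately yields that $x^*$ is a minimizer of \reff{minf0:Ax+b:in:K*}, and moreover of \reff{md:polyDRPO} when $K^*$ has the representation \reff{def:K_y^*}. There is no real obstacle in this proof; the only subtlety is being careful about the nesting of the three optimization problems \reff{eq:SOS:Qres}, \reff{eq:SOS:inirel}, and \reff{minf0:Ax+b:in:K*}, and ensuring the feasibility of $x^*$ for \reff{minf0:Ax+b:in:K*} so that the value-chaining argument closes up.
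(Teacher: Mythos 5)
Your overall strategy --- verify $x^* \in X$ and $f(x^*) = \langle f, w^*\rangle$ via SOS-convexity and then invoke Theorem~\ref{thm:x*:solveDRO} --- is exactly the paper's, and your treatment of $x^* \in X$ and of the Jensen inequality $f(x^*) \le \langle f, w^*\rangle$ is correct. However, the step where you derive $\langle f, w^*\rangle \le f_1$ is broken. You argue that \reff{eq:SOS:Qres} is a restriction of \reff{eq:SOS:inirel} and that \reff{eq:SOS:inirel} is a relaxation of \reff{minf0:Ax+b:in:K*}; but a \emph{restriction} of a minimization problem has a \emph{larger} (or equal) optimal value, so these containments only give $\langle f, w^*\rangle \ge$ the optimal value of \reff{eq:SOS:inirel}, and the optimal value of \reff{eq:SOS:inirel} is $\le f_1$. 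These two inequalities point in opposite directions and cannot be chained into the upper bound $\langle f, w^*\rangle \le f_1$ that you need. A priori, for a small relaxation order $k$ the value of \reff{eq:SOS:Qres} could exceed $f_1$, and nothing in your argument rules this out. It is a warning sign that your proof never uses two of the theorem's hypotheses: $y^* \in \overline{\mathscr{R}_d(S)}$ and the absence of a duality gap between \reff{eq:SOS:Qres} and \reff{eq:SOS:Qdual}.

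Those two hypotheses are exactly what supply the missing inequality, via the dual side rather than the primal side. As in the proof of Theorem~\ref{thm:x*:solveDRO}: the no-duality-gap assumption gives $\langle f, w^*\rangle = \gamma^* - \langle b, y^*\rangle$; the membership $y^* \in \overline{\mathscr{R}_d(S)}$ together with Theorem~\ref{thm:TightRel} shows that $(\gamma^*, y^*)$ is feasible (indeed optimal) for \reff{eq:SOS:dual}, so $\gamma^* - \langle b, y^*\rangle$ equals the optimal value $f_2$ of \reff{eq:SOS:dual}; and weak duality between \reff{eq:SOS:inirel} and \reff{eq:SOS:dual} gives $f_2 \le$ the optimal value of \reff{eq:SOS:inirel} $\le f_1$. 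Combining this with Jensen and with the feasibility of $x^*$ for \reff{minf0:Ax+b:in:K*} closes the loop
\[
f_1 \;\ge\; f_2 \;=\; \langle f, w^*\rangle \;\ge\; f(x^*) \;\ge\; f_1 ,
\]
which yields $f(x^*) = \langle f, w^*\rangle$ and lets Theorem~\ref{thm:x*:solveDRO} finish the argument. Once you replace your primal-containment reasoning with this dual argument, the rest of your proof goes through.
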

\begin{proof}
Since $f$ and $-c_1, \ldots, -c_{m_2}$ are SOS-convex polynomials,
by the Jensen's inequality (see \cite{LasserreConvexity}), it holds that
\[
f(x^*) = f( \pi(w^*) ) \le \langle f, w^* \rangle,
\]
\[
c_i(x^*) = c_i( \pi(w^*) ) \ge \langle c_i, w^* \rangle, \, i=1,\ldots, m_2.
\]
Similarly, the constraint $L_{c_i}^{(d_1)}[w^*] \succeq 0$ implies that
$\langle c_i, w^* \rangle \ge 0$. So $x^* \in X$
is a feasible point of \reff{minf0:Ax+b:in:K*}.
As in the proof of Theorem~\ref{thm:x*:solveDRO}, we can similarly show that
\[
f_1 \ge f_2 = \langle f, w^* \rangle \ge f(x^*) \ge f_1 ,
\]
so $f(x^*)= \langle f, w^* \rangle$.
The conclusions follow from Theorem~\ref{thm:x*:solveDRO}.
\end{proof}

\subsection{An algorithm for solving the DROM}
\label{ssc:alg}

Based on the above discussions, we now give the algorithm
for solving the optimization problem \reff{eq:SOS:inirel}
and its dual \reff{eq:SOS:dual}, as well as the DROM (\ref{md:polyDRPO}).
\begin{alg}
\label{def:alg}
For given $f, h, \mc{M}, S, X, Y$
and the defining polynomial tuples $g$ and $c$, do the following:

\begin{itemize}
	
\item[Step 0] Get a computational representation for $\overline{ \cone{Y} }$
and the dual cone $Y^*$. Initialize
\[
d_0  \coloneqq  \lceil \deg(g)/2\rceil, \quad
t_0  \coloneqq  \lceil d/2 \rceil,\quad
k  \coloneqq  \lceil d/2 \rceil, \quad  l  \coloneqq  t_0+1.
\]
Choose a generic polynomial $R \in \Sig[\xi]_{2t_0+2}$.

\item[Step 1]
Solve \reff{eq:SOS:Qres} for a minimizer $(x^*, w^*)$
and solve \reff{eq:SOS:Qdual} for a maximizer $(\gamma^*,y^*,z^*)$.

\item[Step 2]
Solve the moment optimization
\be \label{TSMP:<R,v>}
\left\{
\baray{cl}
\min\limits_{\omega}  & \langle R, \omega\rangle\\
\st  & \omega|_d =y^* , \,
\omega \in\mathscr{S}[g]_{2\ell}, \, \omega \in \re^{ \N^p_{2\ell} }.
\earay
\right.
\ee
If \reff{TSMP:<R,v>} is infeasible, then $y^*$ admits no $S$-measure,
update $k \coloneqq k+1$ and go back to Step~1.
Otherwise, solve \reff{TSMP:<R,v>} for a minimizer $\omega^*$ and go to Step~3.
	
\item[Step 3] Check whether or not
there exists an integer $s \in [\max(d_0,t_0), \ell]$ such that
\[
 \rank\, M_{s-d_0}[\omega^*] \,= \, \rank\, M_{s}[\omega^*].
\]
If such $s$ does not exist, update $\ell \coloneqq \ell+1$ and go to Step~2.
If such $s$ exists, then $y^* = \int [\xi]_d \mt{d} \mu$ for the measure
\[
\mu \, = \,  \theta_1 \dt_{u_1}+\cdots+ \theta_r \dt_{u_r} .
\]
In the above, the scalars $\theta_1,\ldots,\theta_r>0$,
$u_1, \ldots, u_r \in S$ are distinct points, $r =\rank\, M_{s}[\omega^*]$,
and $\dt_{u_i}$ denotes the Dirac measure supported at $u_i$.
Up to scaling, a measure $\mu^*\in\mathcal{M}$
that achieves the worst case expectation constraint
can be recovered as a multiple of $\mu$.
\end{itemize}
\end{alg}

\begin{remark}
All optimization problems in Algorithm~\ref{def:alg} can be solved numerically
by the software \texttt{GloptiPoly3} \cite{GloptiPoly3},
\texttt{YALMIP} \cite{LofbergYalmip} and \texttt{SeDuMi} \cite{JSturmSedumi}.
In Step 0, we assume $\overline{ \cone{Y} }$
can be expressed by linear, second order or semidefinite cones.
See Section~\ref{sec:numerical} for more details.
In Step~1, if \reff{eq:SOS:Qres} is unbounded from below,
then \reff{minf0:Ax+b:in:K*} must also be unbounded from below.
If \reff{eq:SOS:Qdual} is unbounded from above,
then \reff{eq:SOS:dual} may be unbounded from above
(and hence \reff{minf0:Ax+b:in:K*} is infeasible) ,
or it may be because the relaxation order $k$ is not large enough.
We refer to \cite{JNieLinearOptMoment} for how to
verify unboundedness of \reff{eq:SOS:dual}.
Generally, one can assume \reff{eq:SOS:Qres}
and \reff{eq:SOS:Qdual} have optimizers.
In Step~3, the finitely atomic measure $\mu$
can be obtained by computing Schur decompositions and eigenvalues.
We refer to \cite{ExtractSolGloptipoly} for the method.
It is also implemented in the software \texttt{GloptiPoly3}.
Note that the measure $\mu$ associated with $y^*$
may not belong to $\mathcal{M}$. This is because (\ref{eq:SOS:Qdual})
has the conic constraint $y\in \overline{cone(Y)}$ instead of $y\in Y$.
Once the atomic measure $\mu$ is extracted,
we can choose a scalar $\beta>0$ such that $\beta \mu\in \mathcal{M}$.
\end{remark}

\subsection{Convergence of Algorithm~\ref{def:alg}}
\label{ssc:convergence}

In this subsection, we prove the convergence of Algorithm \ref{def:alg}.
The main results here are based on the work
\cite{JNieAtruncated,JNieLinearOptMoment}.

First, we consider the relatively simple but still interesting case that
$\xi$ is a univariate random variable (i.e., $p=1$)
and the support set $S=[a_1, a_2]$ is an interval.
For this case, Algorithm~\ref{def:alg} must terminate in the initial loop
$k  \coloneqq  \lceil d/2 \rceil$ with $y^* \in \mathscr{R}_d(S)$,
if $(\gamma^*, y^*, z^*)$ is a maximizer of \reff{eq:SOS:Qdual}.

\begin{theorem} \label{thm:UTMPconv}
Suppose the random variable $\xi$ is univariate
and the set $S=[a_1,a_2]$, for scalars $a_1 < a_2$, is an interval
with the constraint $g(\xi)  \coloneqq  (\xi-a_1)(a_2-\xi) \ge 0$.
If $(\gamma^*, y^*, z^*)$ is a maximizer of \reff{eq:SOS:Qdual}
for $k = \lceil d/2 \rceil$,
then we must have $z^* \in \mathscr{R}_{2k}(S)$
and hence $y^* \in \mathscr{R}_d(S)$.
\end{theorem}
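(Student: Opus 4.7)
The plan is to reduce the claim to the classical Markov–Lukacs representation of univariate nonnegative polynomials on an interval, and then pass to dual cones. The key observation is that in the univariate case with $S=[a_1,a_2]$ and $g(\xi)=(\xi-a_1)(a_2-\xi)$, the truncated quadratic module $\qmod{g}_{2k}$ coincides exactly with the cone $\mathscr{P}_{2k}(S)$, so the Moment-SOS relaxation incurs no loss at any order.

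First I would invoke the Markov–Lukacs theorem: every $f\in\re[\xi]_{2k}$ that is nonnegative on $[a_1,a_2]$ can be written as $f=\sigma_0+g\sigma_1$ with $\sigma_0\in\Sigma[\xi]_{2k}$ and $\sigma_1\in\Sigma[\xi]_{2k-2}$. By the definition of the truncated quadratic module this says $f\in\qmod{g}_{2k}$, giving the inclusion $\mathscr{P}_{2k}(S)\subseteq\qmod{g}_{2k}$. The reverse inclusion is automatic since $g\ge 0$ on $S$, so we obtain the equality
\[
\mathscr{P}_{2k}(S)\,=\,\qmod{g}_{2k}.
\]

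Next I would take duals. Using the identity $(\qmod{g}_{2k})^*=\mathscr{S}[g]_{2k}$ recalled in Section~\ref{ssc:LoptMom}, together with the basic fact $\mathscr{P}_{2k}(S)^*=\overline{\mathscr{R}_{2k}(S)}$, the displayed equality above yields
\[
\mathscr{S}[g]_{2k}\,=\,\overline{\mathscr{R}_{2k}(S)}.
\]
Because $S=[a_1,a_2]$ is compact, the moment cone $\mathscr{R}_{2k}(S)$ is closed (as noted after Theorem~\ref{thm:TightRel}), so in fact $\mathscr{S}[g]_{2k}=\mathscr{R}_{2k}(S)$. The maximizer $(\gamma^*,y^*,z^*)$ of \reff{eq:SOS:Qdual} satisfies $z^*\in\mathscr{S}[g]_{2k}$ by construction, hence $z^*\in\mathscr{R}_{2k}(S)$, i.e.\ there exists a Borel measure $\mu$ supported in $S$ with $z^*_\af=\int \xi^\af\,\mt{d}\mu$ for all $|\af|\le 2k$.

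Finally, since $k=\lceil d/2\rceil$ we have $2k\ge d$, so the relation $y^*=z^*|_d$ means that the same measure $\mu$ represents $y^*$ up to degree $d$, giving $y^*\in\mathscr{R}_d(S)$. The only delicate point is verifying that the Markov–Lukacs degree bookkeeping matches the truncation used in $\qmod{g}_{2k}$; this is straightforward since the degree of $g$ is $2$, so the SOS factor multiplying $g$ has degree at most $2k-2$, which is exactly what $\qmod{g}_{2k}$ allows. No further Moment-SOS refinement is needed, explaining why Algorithm~\ref{def:alg} terminates in its initial loop for the univariate interval case.
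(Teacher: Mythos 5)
Your proof is correct, but it reaches the conclusion from the opposite side of the duality than the paper does. The paper works directly on the moment side: it writes out $M_k[z]\succeq 0$ and $L_g^{(k)}[z]\succeq 0$ as explicit Hankel matrix inequalities in the entries $z_0,\dots,z_{2k}$ and then cites the classical solution of the truncated moment problem on an interval (Curto--Fialkow, Krein) asserting that these conditions are \emph{necessary and sufficient} for $z\in\mathscr{R}_{2k}(S)$; membership of $z^*$ is then immediate from feasibility. You instead work on the polynomial side, invoking Markov--Lukacs to get the cone identity $\mathscr{P}_{2k}(S)=\qmod{g}_{2k}$ (your degree bookkeeping is right: $\deg g=2$, so the SOS multiplier lives in $\Sigma[\xi]_{2k-2}$, exactly what $\qmod{g}_{2k}$ permits), and then dualize using $(\qmod{g}_{2k})^*=\mathscr{S}[g]_{2k}$ and $\mathscr{P}_{2k}(S)^*=\mathscr{R}_{2k}(S)$ (closedness from compactness of $[a_1,a_2]$) to conclude $\mathscr{S}[g]_{2k}=\mathscr{R}_{2k}(S)$. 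The two key lemmas are dual to one another, so the mathematical content is the same, but your route has the advantage of producing the clean structural identity $\mathscr{S}[g]_{2k}=\mathscr{R}_{2k}(S)$, which makes transparent why the lowest-order relaxation is already exact for every feasible $z$, not just for the maximizer; the paper's route has the advantage of being self-contained at the level of explicit matrices and of not needing the dual-cone identities from the preliminaries. Both correctly finish by truncating: $2k\ge d$, so the representing measure for $z^*$ also represents $y^*=z^*|_d$.
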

\begin{proof}
In the relaxation~\reff{eq:SOS:Qdual}, the tms $z$ has the even degree $2k$.
We label the entries of $z$ as
$
z = (z_0, z_1, \ldots, z_{2k} ).
$
The condition $z \in \mathscr{S}[g]_{2k}$ implies that
\begin{equation}
\label{eq:UTMPcondi}
M_k[z] \succeq 0, \quad L_{g}^{(k)}[z] \succeq 0.
\end{equation}
Since $g = (\xi-a_1)(a_2-\xi)$, one can verify that
$L_{g}^{(k)}[z] \succeq 0$ is equivalent to
\[
(a_1+a_2)\left[\baray{llcl}
z_1 & z_2 & \cdots & z_k \\
z_2 & z_3 & \cdots & z_{k+1} \\
\vdots & \vdots & \ddots & \vdots \\
z_k & z_{k+1} & \cdots & z_{2k-1} \\
\earay\right] \succeq
a_1a_2 \left[\baray{llcl}
z_0 & z_1 & \cdots & z_{k-1} \\
z_1 & z_2 & \cdots & z_{k} \\
\vdots & \vdots & \ddots & \vdots \\
z_{k-1} & z_{k} & \cdots & z_{2k-2} \\
\earay\right] +
\]
\[
\left[\baray{llcl}
z_2 & z_3 & \cdots & z_{k} \\
z_3 & z_4 & \cdots & z_{k+1} \\
\vdots & \vdots & \ddots & \vdots \\
z_{k} & z_{k+1} & \cdots & z_{2k} \\
\earay\right]  .
\]
As shown in \cite{CurtoUTMP,KreinUTMP}, the (\ref{eq:UTMPcondi})
are sufficient and necessary conditions for $z \in \mathscr{R}_{2k}(S)$.
So, if $(\gamma^*, y^*, z^*)$ is a maximizer of \reff{eq:SOS:Qdual},
then $M_k[z^*]\succeq 0$ and $L_{g}^{(k)}[z^*] \succeq 0$.
Hence, we have $z^* \in \mathscr{R}_{2k}(S)$ and hence
$y^* = z^*|_d \in \mathscr{R}_d(S)$.
\end{proof}

Second, we prove the asymptotic convergence of Algorithm~\ref{def:alg}
when the random variable $\xi$ is multi-variate.
It requires that the quadratic module $\qmod{g}$ is archimedean
and \reff{eq:SOS:inirel} has interior points.

\begin{theorem} \label{thm:asymptotic}
Assume that $\qmod{g}$ is archimedean
and there exists a point $\hat{x} \in X$ such that
$h(\hat{x}, \xi) = a_1(\xi) + a_2(\xi)$
with $a_1 > 0$ on $S$ and $a_2 \in Y^*$.
Suppose $(\gamma^{(k)}, y^{(k)},z^{(k)})$ is an optimal triple
of \reff{eq:SOS:Qdual} when its relaxation order is $k$.
Then, the sequence $\{ y^{(k)} \}_{k=1}^\infty$ is bounded
and every accumulation point of $\{ y^{(k)} \}_{k=1}^\infty$
belongs to the cone $\mathscr{R}_d(S)$.
Therefore, every accumulation point of
$\{ (\gamma^{(k)}, y^{(k)}) \}_{k=1}^\infty$
is a maximizer of (\ref{eq:SOS:dual}).
\end{theorem}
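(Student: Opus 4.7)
The plan is to establish the three claims in order using the Slater-type point $\hat{x}$ and the archimedeanness of $\qmod{g}$: the former supplies a quantitative strict positivity, and the latter compactifies $S$ and activates Putinar's Positivstellensatz.

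For boundedness of $\{y^{(k)}\}$, I would first evaluate the dual polynomial constraint $f(x) - y^{T}Ax - \gamma \in \qmod{c}_{2d_1}$ at $\hat{x}\in X$, where every element of $\qmod{c}_{2d_1}$ is nonnegative, yielding
\[
\gamma^{(k)} - b^{T}y^{(k)} \;\le\; f(\hat{x}) - \langle h(\hat{x},\cdot),\, y^{(k)}\rangle.
\]
Since $a_1 > 0$ on the compact set $S$, pick $\eps > 0$ with $a_1 - \eps > 0$ on $S$; Putinar's theorem then gives $a_1 - \eps \in \qmod{g}_{2k}$ for all $k$ sufficiently large, so pairing with $z^{(k)} \in \mathscr{S}[g]_{2k}$ yields $\langle a_1, y^{(k)}\rangle \ge \eps y_0^{(k)}$. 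Combined with $\langle a_2, y^{(k)}\rangle \ge 0$ (from $a_2 \in Y^*$ and $y^{(k)} \in \overline{\cone{Y}}$), this produces $\langle h(\hat{x},\cdot), y^{(k)}\rangle \ge \eps y_0^{(k)}$. Since the dual values $v_k^* := \gamma^{(k)} - b^{T}y^{(k)}$ are bounded below (by lifting any fixed feasible point of \reff{eq:SOS:dual} into each relaxation), we obtain $y_0^{(k)} \le (f(\hat{x}) - v_k^*)/\eps$ bounded. The standard archimedean estimate $|z_\alpha^{(k)}| \le C_\alpha z_0^{(k)}$ for each fixed $|\alpha|$ then controls $\|y^{(k)}\|$, and by the same display also controls $\gamma^{(k)}$.

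For accumulation points, a diagonal extraction on $\{z^{(k)}\}$ produces a subsequence along which $z^{(k)}|_{2\ell}$ converges for each fixed $\ell$ to $z^\star|_{2\ell}$ satisfying $M_\ell[z^\star] \succeq 0$ and $L_{g_i}^{(\ell)}[z^\star] \succeq 0$ for all $\ell$; archimedeanness together with the Putinar/Haviland theorem then realises $z^\star$ as the moment sequence of a Borel measure on $S$, whence $y^\star = z^\star|_d \in \mathscr{R}_d(S)$. To upgrade this to optimality for \reff{eq:SOS:dual}, denote its value by $v^*$. Any $(\gamma,y)$ feasible for \reff{eq:SOS:dual} has $y\in\overline{K}\subseteq\mathscr{R}_d(S)$, and lifting a representing measure of $y$ to all moments up to degree $2k$ produces a feasible triple of \reff{eq:SOS:Qdual} with the same objective, so $v_k^* \ge v^*$ for every $k$; since $v_k^*$ is non-increasing in $k$, it decreases to $v^*$. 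Along the extracted subsequence the polynomial constraint in $x$ and the conic memberships are preserved in the limit, and $y^\star \in \mathscr{R}_d(S) \cap \overline{\cone{Y}} = \overline{K}$ (invoking \reff{closure=cap}) makes $(\gamma^\star, y^\star)$ feasible for \reff{eq:SOS:dual} with objective value $\gamma^\star - b^{T}y^\star = \lim v_{k_j}^* = v^*$, hence a maximizer.

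The hard part is the uniform moment bound: picking $\eps > 0$ and invoking Putinar's theorem uniformly in $k$, together with ensuring that $v_k^*$ stays bounded below along the whole sequence, is the technically loaded step; a secondary subtlety is the identification $\mathscr{R}_d(S) \cap \overline{\cone{Y}} = \overline{K}$, which depends on the closure condition \reff{closure=cap} implicit to the setting.
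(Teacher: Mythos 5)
Your proposal is correct and follows essentially the same route as the paper: evaluate the dual polynomial constraint at the Slater point $\hat{x}$, use archimedeanness to get $a_1-\eps\in\qmod{g}_{2k_0}$ and hence $\langle h(\hat x,\cdot),y^{(k)}\rangle\ge\eps\, y^{(k)}_0$, bound the dual optimal values below via the relaxation property, and conclude boundedness and then membership of accumulation points in $\mathscr{R}_d(S)$. The only cosmetic difference is that you prove the last step by diagonal extraction plus Putinar/Haviland, whereas the paper cites the identity $\mathscr{R}_d(S)=\bigcap_k\{z|_d:z\in\mathscr{S}[g]_{2k}\}$ from the literature; these are the same underlying fact, and your explicit flagging of the closure condition \reff{closure=cap} matches the paper's implicit reliance on it.
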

\begin{proof}
For every $(\gamma, y, z)$ that is feasible for \reff{eq:SOS:Qdual}
and for $\hat{w}  \coloneqq  [\hat{x}]_{2d_1}$, it holds that
\be \label{gap>=Ax+b*y}
 \langle f, \hat{w} \rangle - \big( \gamma - \langle b, y \rangle \big) =
\langle f-y^TAx-\gamma, \hat{w} \rangle + (A\hat{x}+b)^T y \ge  (A\hat{x}+b)^T y.
\ee
There exists $\eps>0$ such that $a_1(\xi) - \eps \in \qmod{g}_{2k_0}$,
for some $k_0 \in \N$, since $\qmod{g}$ is archimedean.
Noting $a_2 \in Y^*$, one can see that
\[
 (A\hat{x} +b)^T y = \langle h(\hat{x}, \xi), y \rangle =
\langle a_1(\xi), y \rangle + \langle a_2(\xi), y \rangle
\ge \langle a_1(\xi), y \rangle .
\]
For all $k \ge k_0$, it holds that
\[
\langle a_1(\xi), y \rangle = \langle a_1(\xi) -\eps, y \rangle
+ \eps \langle 1, y \rangle \ge \eps \langle 1, y \rangle  = \eps y_0.
\]
(Note $\langle 1, y \rangle  = y_0$.)
Let $f_2$ be the optimal value of \reff{eq:SOS:dual}, then
\[
\gamma^{(k)} - \langle b, y^{(k)} \rangle \ge f_2 ,
\]
because $(\gamma^{(k)}, y^{(k)},z^{(k)})$ is an optimizer of \reff{eq:SOS:Qdual},
and \reff{eq:SOS:Qdual} is a relaxation of the maximization \reff{eq:SOS:dual}.
So \reff{gap>=Ax+b*y} implies that
\[
 (A\hat{x}+b)^T y^{(k)} \le  \langle f, \hat{w} \rangle - f_2.
\]
Hence, we can get that
\[
(y^{(k)})_0 \le \frac{1}{\eps}(  \langle f, \hat{w} \rangle - f_2 ) .
\]
The sequence $\big\{ (y^{(k)})_0 \big\}_{k=1}^\infty$ is bounded.

Since $\qmod{g}$ is archimedean, there exists $N>0$ such that
$N - \| \xi \|^2 \in \qmod{g}_{2k_1}$ for some $k_1 \ge k_0$.
For all $k \ge k_1$, the membership $z^{(k)} \in \mathscr{S}[g]_{2k}$ implies that
\[
N \cdot (z^{(k)})_0 - \big( (z^{(k)})_{2e_1} + \cdots + (z^{(k)})_{2e_p} \big) \ge 0.
\]
Note that $y^{(k)}=z^{(k)}|_d$, hence $(y^{(k)})_0 = (z^{(k)})_0$.
Since $z^{(k)} \in \mathscr{S}[g]_{2k}$ and the sequence
$\big\{ (z^{(k)})_0 \big\}_{k=1}^\infty$ is bounded,
one can further show that the set
\[ \{z^{(k)}|_d : z \in \mathscr{S}[g]_{2k} \}_{k=1}^\infty \]
is bounded. We refer to \cite[Theorem~4.3]{JNieLinearOptMoment}
for more details about the proof.
Therefore, the sequence $\{ y^{(k)} \}_{k=1}^\infty$ is bounded.
Since $\qmod{g}$ is archimedean, we also have
\[
\mathscr{R}_d(S) \, = \, \bigcap_{k=1}^\infty  S_k, \quad \mbox{where} \quad
S_k  \coloneqq  \{z|_d: \, z \in \mathscr{S}[g]_{2k} \}.
\]
This is shown in Proposition~3.3 of \cite{JNieLinearOptMoment}.
So, if $\hat{y}$ is an accumulation point of $\{ y^{(k)} \}_{k=1}^\infty$,
then we must have $\hat{y} \in \mathscr{R}_d(S)$.
Similarly, if $(\hat{\gamma}, \hat{y}, \hat{z})$ is an
accumulation point of $\{ (\gamma^{(k)}, y^{(k)},z^{(k)}) \}_{k=1}^\infty$,
then $\hat{y} \in \mathscr{R}_d(S)$.
As in the proof of Theorem~\ref{thm:TightRel},
one can similarly show that $(\hat{\gamma}, \hat{y})$
is a maximizer of (\ref{eq:SOS:dual}). 
\end{proof}

Last, we prove that Algorithm~\ref{def:alg} will terminate within
finitely many steps under certain assumptions.
Like Theorem~\ref{thm:asymptotic},
we also assume the archimedeanness of $\qmod{g}$.
When $\qmod{g}$ is not archimedean, if the set
$S = \{\xi\in\re^p: g(\xi)\ge 0\}$ is bounded,
we can replace $g$ by $\tilde{g} = (g, N-\|\xi\|^2)$
where $N$ is such that $S \subseteq \{ \| \xi \|^2 \le N\}$.
Then $\qmod{\tilde{g}}$ is archimedean.
Moreover, we also need to assume the strong duality
between (\ref{eq:SOS:inirel}) and (\ref{eq:SOS:dual}),
which is guaranteed under the Slater's condition for (\ref{eq:SOS:dual}).
These assumptions typically hold for polynomial optimization.

\begin{theorem} \label{thm:finite}
Assume $\qmod{g}$ is archimedean and there is no duality gap
between \reff{eq:SOS:inirel} and \reff{eq:SOS:dual}.
Suppose $(x^*, w^*)$ is a minimizer of \reff{eq:SOS:inirel}
and $(\gamma^*, y^*)$ is a maximizer of \reff{eq:SOS:dual} satisfying:
\begin{itemize}
\item[(i)] There exists $k_1\in\mathbb{N}$ such that
$h(x^*, \xi) = h_1(\xi) + h_2(\xi)$,
with $h_1\in \qmod{g}_{2k_1}$ and $h_2 \in Y^*$.

\item[(ii)] The polynomial optimization problem in $\xi$
\be \label{POP:h1(xi)}
\left\{ \baray{cl}
\min\limits_{\xi \in \re^p} & h_1(\xi)   \\
 \st & g_1(\xi) \ge 0, \ldots, g_{m_1}(\xi) \ge 0
\earay  \right.
\ee
has finitely many critical points $u$ such that $h_1(u) = 0$.
\end{itemize}
Then, when $k$ is large enough, for every optimizer
$(\gamma^{(k)}, y^{(k)}, z^{(k)})$ of \reff{eq:SOS:Qdual},
we must have $y^{(k)} \in \mathscr{R}_d(S)$.
\end{theorem}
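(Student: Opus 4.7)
The plan is to combine strong duality in the relaxed primal--dual pair \reff{eq:SOS:Qres}--\reff{eq:SOS:Qdual} with the finite-convergence theory for Lasserre's hierarchy applied to the auxiliary polynomial optimization problem \reff{POP:h1(xi)}. The decomposition from assumption (i) will serve as an explicit SOS-type certificate that forces the relaxations to be tight for all $k\ge k_1$, after which the conclusion is reduced to showing that every moment optimizer $z^{(k)}$ eventually admits a flat truncation.

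First I would verify that the pair \reff{eq:SOS:Qres}--\reff{eq:SOS:Qdual} is already tight for every $k\ge k_1$. By assumption (i), the tms $w^*:=[x^*]_{2d_1}$ is feasible for \reff{eq:SOS:Qres} at order $k_1$, because $h(x^*,\xi)=h_1+h_2 \in \qmod{g}_{2k_1}+Y^* \subseteq \qmod{g}_{2k}+Y^*$ for all $k\ge k_1$, and it attains the value $f(x^*)=\langle f,w^*\rangle$. Combining weak duality with the no-duality-gap hypothesis between \reff{eq:SOS:inirel} and \reff{eq:SOS:dual} gives the chain
\[
f(x^*) \,\ge\, \mathrm{val}\reff{eq:SOS:Qres} \,\ge\, \mathrm{val}\reff{eq:SOS:Qdual} \,\ge\, \mathrm{val}\reff{eq:SOS:dual} \,=\, f(x^*),
\]
so equality holds throughout. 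In particular $w^*$ remains a primal minimizer at every such order, and any dual optimizer attains the same value $\langle f,w^*\rangle$.

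Next I would exploit complementary slackness between $w^*$ and an arbitrary dual maximizer $(\gamma^{(k)},y^{(k)},z^{(k)})$. Strong duality rewrites the vanishing duality gap as
\[
0 \,=\, \langle f-y^{(k)T}Ax-\gamma^{(k)},\,w^*\rangle + \langle h(x^*,\xi),\,y^{(k)}\rangle,
\]
in which the first summand is nonnegative since $f-y^{(k)T}Ax-\gamma^{(k)}\in\qmod{c}_{2d_1}$ and $w^*\in\mathscr{S}[c]_{2d_1}$, and the second expands under (i) as $\langle h_1,z^{(k)}\rangle+\langle h_2,y^{(k)}\rangle$ (using $\deg h_1\le d$ to identify $\langle h_1,y^{(k)}\rangle$ with $\langle h_1,z^{(k)}\rangle$), with both summands individually nonnegative because $h_1\in\qmod{g}_{2k_1}\subseteq\qmod{g}_{2k}$ pairs with $z^{(k)}\in\mathscr{S}[g]_{2k}$ and $h_2\in Y^*$ pairs with $y^{(k)}\in\overline{\cone{Y}}$. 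All three nonnegative contributions must therefore vanish; in particular, $\langle h_1,z^{(k)}\rangle=0$. This identifies $z^{(k)}$ (after normalization by $(z^{(k)})_0$) as an optimal tms of order $k$ for the Lasserre relaxation of \reff{POP:h1(xi)}, whose true minimum equals $0$ and whose minimizer set is finite by (ii).

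At this stage I would invoke the flat-truncation and finite-convergence results developed in \cite{JNieAtruncated,JNieLinearOptMoment}: archimedeanness of $\qmod{g}$ together with the finiteness of the zero-level critical set from (ii) is the hypothesis that guarantees, for all sufficiently large $k$, the rank condition
\[
\rank M_{k-d_0}[z^{(k)}]\,=\,\rank M_k[z^{(k)}]
\]
for every such minimizer $z^{(k)}$. Flat truncation then produces a finitely atomic $S$-representing measure for $z^{(k)}$, whose $d$th-degree truncation represents $y^{(k)}=z^{(k)}|_d$, yielding $y^{(k)}\in\mathscr{R}_d(S)$. The main obstacle will be this final step: one needs the flat-extension theorem in a form that applies uniformly to every dual optimizer and relies on the single hypothesis (ii) without additional nondegeneracy conditions at the critical points, and one must also check that the coupling through the $x$-side constraint $f(x)-y^TAx-\gamma\in\qmod{c}_{2d_1}$ does not destroy the purely POP structure on $z^{(k)}$ that the flat-extension argument exploits.
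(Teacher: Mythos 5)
Your overall strategy coincides with the paper's proof: use assumption (i) to make the restricted pair \reff{eq:SOS:Qres}--\reff{eq:SOS:Qdual} tight for all $k\ge k_1$, extract $\langle h_1,z^{(k)}\rangle=0$ by complementary slackness, recognize the normalized $z^{(k)}$ as an optimal tms for the Lasserre moment relaxation of \reff{POP:h1(xi)} (whose value is $0$), and invoke the flat-truncation theorem of \cite{JNieAtruncated}, whose hypotheses are exactly archimedeanness plus finitely many global minimizers among the critical points --- which is what (ii) supplies. Two steps need repair, however. First, you replace the given minimizer by $w^*:=[x^*]_{2d_1}$; but feasibility of $[x^*]_{2d_1}$ for \reff{eq:SOS:Qres} requires $c_i(x^*)\ge 0$, and the link $\mathrm{val}\reff{eq:SOS:dual}=f(x^*)$ requires $\langle f,w^*\rangle=f(\pi(w^*))$ --- neither follows from the hypotheses of this theorem, since SOS-convexity is not assumed here. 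The substitution is unnecessary: the original $(x^*,w^*)$ is itself feasible for \reff{eq:SOS:Qres} as soon as $h(x^*,\xi)=h_1+h_2\in\qmod{g}_{2k_1}+Y^*$, and your chain of values then closes with $\mathrm{val}\reff{eq:SOS:dual}=\mathrm{val}\reff{eq:SOS:inirel}=\langle f,w^*\rangle$.

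Second, the normalization $\hat z=z^{(k)}/(z^{(k)})_0$ silently assumes $(z^{(k)})_0>0$. The degenerate case $(z^{(k)})_0=0$ must be treated separately: there $M_k[z^{(k)}]\succeq 0$ with vanishing $(1,1)$-entry forces $M_k[z^{(k)}]\,\vec{1}=0$, hence by the standard kernel argument for moment matrices $z^{(k)}|_{2k-2}=0$, so $y^{(k)}$ is the zero tms and lies in $\mathscr{R}_d(S)$ trivially. This is exactly how the paper completes the argument. With these two adjustments your proof matches the paper's.
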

\begin{proof}
Since there is no duality gap between \reff{eq:SOS:inirel}
and \reff{eq:SOS:dual},
\[
0 = \langle f, w^* \rangle - \big( \gamma^* - \langle b, y^* \rangle \big) =
\langle f-(y^*)^TAx-\gamma^*, w^* \rangle + (Ax^*+b)^T y^*.
\]
Due to the feasibility constraints, we further have
\[
\langle f(x)-(y^*)^TAx-\gamma^*, w^* \rangle = 0, \quad  (Ax^*+b)^T y^* = 0.
\]
Therefore, it holds that
\[
(Ax^*+b)^T y^* = \langle h(x^*, \xi), y^* \rangle =
\langle h_1(\xi), y^* \rangle + \langle h_2(\xi), y^* \rangle =   0.
\]
The conic membership $y^* \in \overline{K}$ implies that
\[
\langle h_1(\xi), y^* \rangle = \langle h_2(\xi), y^* \rangle = 0.
\]
We consider the polynomial optimization problem
\reff{POP:h1(xi)} in the variable $\xi$.
For each order $k \ge k_1$, the $k$th order Moment-SOS relaxation pair
for solving \reff{POP:h1(xi)} is
\be \label{mom:min<h1,z>}
\min  \quad \langle h_1(\xi), z \rangle \quad \st \quad
z \in \mathscr{S}[g]_{2k} , z_{0} = 1, \
\ee
\be \label{sos:max:gm}
\nu_k  \coloneqq  \,\, \max  \quad \gamma \quad \st \quad
h_1(\xi) -\gamma \in  \qmod{g}_{2k}.
\ee
The archimedeanness of $\qmod{g}$ implies that $S$ is compact,
so \[ \overline{ \mathscr{R}_d(S) } = \mathscr{R}_d(S). \]
The membership $y^* \in \overline{K}$ implies that $y^* \in \mathscr{R}_d(S)$.
Since \[ \langle h_1(\xi), y^* \rangle=0, \]
the polynomial $h_1(\xi)$ vanishes on the support of
each $S$-representing measure for $y^*$,
so the optimal value of \reff{POP:h1(xi)} is zero.
By the given assumption, the sequence $\{\nu_k\}$
has finite convergence to the optimal value $0$
and the relaxation \reff{sos:max:gm} achieves its optimal value for all $k \geq k_1$.
The optimization \reff{POP:h1(xi)} has only finitely many critical points
that are global optimizers.
So, Assumption~2.1 of \cite{JNieFlatTruncation} for the optimization \reff{POP:h1(xi)}
is satisfied. Moreover, the given assumption also implies that
$(x^*, w^*)$ is an optimizer of \reff{eq:SOS:Qres} and
$(\gamma^*, y^*, z^*)$ is an optimizer of \reff{eq:SOS:Qdual} for all $k \ge k_1$.
Suppose $(x^{(k)}, w^{(k)})$ is an arbitrary optimizer of \reff{eq:SOS:Qres} and
$(\gamma^{(k)}, y^{(k)}, z^{(k)})$ is an arbitrary optimizer
of \reff{eq:SOS:Qdual}, for the relaxation order $k$.

When $(z^{(k)})_{0}=0$, we have $vec(1)^T M_k [z^{(k)}] vec(1) =0$.
Since $M_k[z^{(k)}] \succeq 0$,
\[ M_k[z^{(k)}] vec(1) =0 . \]
Consequently, we further have $M_k[z^{(k)}] vec(\xi^\af)=0$ for all $|\af| \leq k-1$
(see Lemma~5.7 of \cite{LaurentSOSmom2009}).
Then, for each power $\af = \bt + \eta$ with $|\bt|,|\eta| \leq k-1$,
one can get
$
(z^{(k)})_\af= vec(\xi^\bt)^T M_k[z^{(k)}] vec(\xi^\eta) =0.
$
This means that $z^{(k)}|_{2k-2}$ is the zero vector
and hence $y^{(k)} \in \mathscr{R}_d(S)$.

For the case $(z^{(k)})_{0}>0$, let $\hat{z}  \coloneqq   z^{(k)}/(z^{(k)})_0$.
The given assumption implies that $(x^*, w^*)$
is also a minimizer of \reff{eq:SOS:Qres} and $(\gamma^*, y^*,z^*)$
is optimal for \reff{eq:SOS:Qdual}, for all $k \ge k_1$.
So there is no duality gap between \reff{eq:SOS:Qres} and \reff{eq:SOS:Qdual}.
Since $(\gamma^{(k)}, y^{(k)}, z^{(k)})$ is optimal for \reff{eq:SOS:Qdual},
so $\langle h_1(\xi), z^{(k)} \rangle = 0$ and hence
$\hat{z}$ is a minimizer of \reff{mom:min<h1,z>} for all $k\geq k_1$.
By Theorem~2.2 of \cite{JNieAtruncated}, the minimizer
$z^{(k)}$ must have a flat truncation $z^{(k)}|_{2t}$ for some $t$,
when $k$ is sufficiently big.
This means that the truncation $z^{(k)}|_{2t}$, as well as $y^{(k)}$,
has a representing measure supported in $S$.
Therefore, we have $y^{(k)} \in \mathscr{R}_d(S)$.
\end{proof}

The conclusion of Theorem~\ref{thm:finite} is guaranteed to hold
under conditions (i) and (ii),
which depend on the constraints $g$ and the set $Y$.
These two conditions are not convenient to verify computationally.
However, in computational practice of Algorithm~\ref{def:alg},
there is no need to check or verify them.
The correctness of computational results by Algorithm~\ref{def:alg}
does not depend on conditions (i) and (ii).
In other words, the conditions (i) and (ii) are sufficient
for Algorithm~\ref{def:alg} to have finite convergence,
but they may not be necessary.
It is possible that the finite convergence occurs even if some of them fail to hold.
In our numerical experiments, the finite convergence is always observed.
We also like to remark that the conditions (i) and (ii) generally hold,
which is a main topic of the work \cite{JNieOptCondition}.
In particular, when $h_1$ has generic coefficients,
the optimization \reff{POP:h1(xi)} has finitely many critical points
and so the condition (ii) holds.
This is shown in \cite{JNieFlatTruncation}.

\section{Numerical Experiments}
\label{sec:numerical}

In this section, we give numerical experiments for
Algorithm \ref{def:alg} to solve distributionally robust optimization problems.
The computation is implemented in MATLAB R2018a,
in a Laptop with CPU 8th Generation Intel® Core™ i5-8250U and RAM 16 GB.
The software \texttt{GloptiPoly3} \cite{GloptiPoly3}, \texttt{YALMIP} \cite{LofbergYalmip}
and \texttt{SeDuMi} \cite{JSturmSedumi} are used for the implementation.
For neatness of presentation, we only display four decimal digits.

To apply implement Algorithm \ref{def:alg}, we need a computational representation
for the cone $\overline{cone(Y)}$. For a given set $Y$,
it may be mathematically hard to get a computationally efficient
description for the closure of its conic hull.
However, in most applications, the set $Y$ is often convex
and there usually exist convenient representations for $\overline{cone(Y)}$.
For instance, the $\overline{cone(Y)}$ is often a polyhedra, second order,
or semidefinite cone, or a Cartesian product of them.
The following are some frequently appearing cases.

\bit

\item If $Y=\{y : T y + u  \ge 0\}$ is a nonempty polyhedron,
given by some matrix $T$ and vector $u$, then
\begin{equation}
\label{coneY:linear}
\overline{ cone(Y) }  
\, = \, \{y: Ty+s u \ge 0,\, s \in \re_+ \}.
\end{equation}
It is also a polyhedron and is closed.

\item Consider that $Y =\{y : \mc{A}(y) + B \succeq 0 \}$ is given by
a linear matrix inequality, for a homogeneous linear symmetric matrix valued
function $\mc{A}$ and a symmetric matrix $B$.
If $Y$ is nonempty and bounded, then
\begin{equation}
\label{coneY:LMI}
\overline{ cone(Y) }  \, = \,
\left\{y : \mc{A}(y) + s B \succeq 0,\, s \in \re_+ \right\}  .
\end{equation}
When $Y$ is unbounded, the $\cone{Y}$ may not be closed
and its closure $\overline{ \cone{Y} }$ may be tricky.
We refer to the work \cite{Net10} for such cases.
When $Y$ is given by second order conic conditions,
we can do similar things for obtaining $\overline{ \cone{Y} }$.

\eit

\begin{exm}
\label{exm:basicDROM}
Consider the DROM problem
\begin{equation}
\label{eq:basicDROM}
\left\{
\begin{aligned}
\min_{ x \in \re^4 }\quad & f(x)=-x_1-2x_2-x_3+2x_4\\
\st \quad
&\inf_{\mu\in\mathcal{M}}\mathbb{E}_{\mu}[h(x,\xi)]\ge 0,\\
& 
x \ge 0, \, 1-e^Tx \ge 0,
\end{aligned}
\right.
\end{equation}
where (the random variable $\xi$ is univariate, i.e, $p=1$)
\begin{align*}
h(x,\xi)=(x_4-x_1-2)\xi^5+(x_4-1)\xi^4+(2x_1+x_2+x_4+1)\xi^3\\
+(2x_1-x_2+x_4-1)\xi^2+(2-x_2-x_3)\xi,
\end{align*}
\[
S = [0,3], \quad  g = 3\xi - \xi^2,
\]
\[
Y = \left\{\left.y = \bbm y_0 \\ y_1 \\ \vdots \\ y_5 \ebm \in  \re^6 \right|
\begin{array}{c}
1 \le y_0 \le y_1 \le y_2\le \\
\quad y_3 \le y_4\le y_5 \le 2
\end{array}
\right\}.
\] 
The $\overline{cone(Y)}$ is given as in (\ref{coneY:linear}).
The objective $f$ and constraints $c_1,c_2$ are all linear.
We start with $k=3$, 
and the Algorithm~\ref{def:alg} terminates in the initial loop.
The optimal value $F^*$ and the optimizer
$x^*$ for \reff{minf0:Ax+b:in:K*} are respectively
\[
F^* \approx -0.0326,\quad x^*\approx(0.6775,0.0000,0.0000,0.3225).
\]
The optimizer for (\ref{eq:SOS:Qdual}) is
\[
y^* \approx (0.9355,0.9355,0.9517,1.0163,1.2260,1.8710).
\]
The measure $\mu$ for achieving $y^* = \int [\xi]_5 \mt{d} \mu$
is supported at the points
\[
u_1 \approx 0.9913,\quad u_2 \approx 3.0000.
\]
By a proper scaling, we get the measure
$\mu^* = 0.9957 \delta_{u_1} + 0.0043 \delta_{u_2}$
that achieves the worst case expectation constraint.
\end{exm}

\begin{exm}
\label{ex:SOSuni}
Consider the DROM problem
\begin{equation}
\label{eq:SOSuni}
\left\{
\begin{aligned}
\min_{x\in\mathbb{R}^3}\quad & f(x)=(x_1-x_3+x_1x_3)^2+(2x_2+2x_1x_2-x_3^2)^2\\
\st\quad & \inf_{\mu\in\mathcal{M}}\mathbb{E}_{\mu}[h(x,\xi)]\ge 0,\\
& c_1(x)=1-x_1^2-x_2^2-x_3^2 \ge 0,\\
& c_2(x)=3x_3-x_1^2-2x_2^4\ge 0,\\
\end{aligned}
\right.
\end{equation}
where (the random variable $\xi$ is bivariate, i.e, $p=2$)
\begin{align*}
h(x,\xi) = (1-x_3)\xi_1^2\xi_2^2+(x_1-x_2+x_3-1)\xi_1\xi_2^2+  \\
(x_1+x_2+x_3+1)\xi_2^2+ (x_1-x_3)\xi_1^2-\xi_2,
\end{align*}
\[
S = \{\xi \in\mathbb{R}^2 :\, 1-\xi^T\xi\ge 0\},
\quad g  \coloneqq  1-\xi^T \xi,
\]
\[
Y=\left\{
y\in\mathbb{R}^{ \N^2_4 }
\left|\begin{array}{c}
y_{00} = 1,\,\, 0.1 \le y_{\af} \le 1 \, (0< |\af| \le 4)   \\
\begin{pmatrix}
y_{20} & y_{11}  & y_{30} & y_{12} \\
y_{11} & y_{02}  & y_{21} & y_{03} \\
y_{30} & y_{21}  & y_{40} & y_{22} \\
y_{12} & y_{03}  & y_{22} & y_{04} \\
\end{pmatrix}
\preceq  2 I_4
\end{array}
\right.
\right\}.
\]
The $\overline{cone(Y)}$ is given as in (\ref{coneY:LMI}).
One can verify that $f$ and all $-c_i$ are SOS-convex.
We start with $k=2$, 
and Algorithm \ref{def:alg} terminates in the initial loop.
The optimal value $F^*$ and optimizer $x^*$ 
of \reff{minf0:Ax+b:in:K*} are respectively
\[
F^* \approx 0.0160,\quad x^*\approx(0.4060,0.0800,0.4706).
\]
The optimizer for \reff{eq:SOS:Qdual} is
\begin{align*}
y^* \approx (0.3180,0.2750,0.1411,0.2436,0.1137,0.0744,0.2199, 0.0950, \\
0.0552,0.0460,0.2011, 0.0819,0.0426,0.0318,0.0318).
\end{align*}
The measure $\mu$ for achieving $y^* = \int [\xi]_4 \mt{d} \mu$
is supported at the points
\[
u_1 \approx (0.6325,0.7745),\quad u_2 \approx (0.9434,0.3317).
\]
By a proper scaling, we get the measure
$\mu^* = 0.2527 \delta_{u_1}+ 0.7473 \delta_{u_2}$
that achieves the worst case expectation constraint.
\end{exm}

\begin{exm}
\label{ex:nonSOSuni}
Consider the DROM problem
\begin{equation}
\label{eq:nonSOSuni}
\left\{
\begin{aligned}
\min_{x\in\mathbb{R}^3}\quad & f(x)=x_1^4-2x_1^2+2x_2^3+x_3^4\\
\st\quad & \inf_{\mu\in\mathcal{M}}\mathbb{E}_{\mu}[h(x,\xi)]\ge 0,\\
& c_1(x)=x_1^2+x_2^2+x_3^2-1\ge 0,\\
& c_2(x) = 4-x_1^2-2x_2^2-x_3\ge 0,
\end{aligned}
\right.
\end{equation}
where (the random variable $\xi$ is bivariate, i.e, $p=2$)
\[
\baray{r}
h(x,\xi) = (x_1+x_2+1)\xi_2^4+(3x_1+x_2)\xi_1^2\xi_2+
(x_1+2x_2+x_3+1)\xi_1^3 \\ +2x_1+x_2-2x_3 ,
\earay
\]
\[
S \,= \, \{\xi\in\mathbb{R}^2 :\,
g  \coloneqq  (\xi_1,\xi_2,1-e^T\xi)\ge 0\},
\]
\[
Y = \left\{
y\in\mathbb{R}^{\mathbb{N}_4^2}\left|
\begin{array}{c}
y_{00}=1,\,
0.2^i\le y_{i0}\le 0.6^i, \\
y_{i0} \ge 1.2y_{0i},\, i=1,2,3,4
\end{array}
\right.
\right\}.
\]
In the above, $\overline{cone(Y)}$ is given as in (\ref{coneY:linear}).
The objective $f$ and $-c_1$ are not convex. We start with $k=2$, while the algorithm terminates at $k=3$. In the last loop, the optimizers for (\ref{eq:SOS:Qres}) and (\ref{eq:SOS:Qdual}) are
\[
\baray{r}
w^* \approx (1.0000,0.2692,-1.5454,-0.8493,0.0725,
-0.4161,-0.2287,2.3884,1.3125,\\
0.7213,0.0195,-0.1120,-0.0616,0.6430,0.3534,
0.1942,-3.6911,-2.0284,\\
   -1.1147,-0.6126,0.0053,-0.0302,-0.0166,0.1731,0.0951,
   0.0523,-0.9938,\\
   -0.5461,-0.3001,-0.1649,5.7044,3.1348,1.7227,0.9467,0.5202)\\
y^* \approx
(0.0871,0.0488,0.0383,0.0300,0.0188,
0.0195,0.0184,0.0116,0.0073,0.0122,\\
0.0113,0.0071,0.0045,0.0028,0.0094).
\earay
\]
The optimal value $F^* \approx -7.0017$ for both of them.
The measure for achieving $y^* = \int [\xi]_4 \mt{d} \mu$
is supported at the points
\[
u_1\approx(0.0000,1.0000),\quad u_2\approx (0.6139,0.3861).
\]
By a proper scaling, we get the measure
$\mu^* = 0.0877\delta_{u_1}+0.9123\delta_{u_2}$
that achieves the worst case expectation constraint. The point
\[
x^* = \pi(w^*)\approx(0.2692,-1.5454,-0.8493),
\]
is feasible for (\ref{eq:nonSOSuni}) as $c_1(x^*)\approx2.1822$
and $c_2(x^*)\approx3.9919\cdot 10^{-8}$.
Moreover, $F^*-f(x^*)\approx 1.2204\cdot 10^{-7}.$
By Theorem \ref{thm:x*:solveDRO}, we know $F^*$ is the optimal value and
$x^*$ is an optimizer for (\ref{eq:nonSOSuni}).
\end{exm}

\begin{exm} \label{ex:nonSOSadv}
Consider the DROM problem
\begin{equation}
\label{eq:nonSOSadv}
\left\{
\begin{aligned}
\min_{x\in\mathbb{R}^3}\quad & f(x)=x_1^4-x_1x_2x_3+x_3^3+3x_1x_3+x_2^2\\
\st\quad & \inf_{\mu\in\mathcal{M}}\mathbb{E}_{\mu}[h(x,\xi)]\ge 0,\\
& c_1(x)=x_1x_2-0.25\ge 0, \\
& c_2(x)=6-x_1^2-4x_1x_2-x_2^2-x_3^2\ge 0, 
\end{aligned}
\right.
\end{equation}
where (the random variable $\xi$ is bivariate, i.e, $p=2$)
\begin{align*}
h(x,\xi) = (2-x_1+x_2)\xi_2^4+(x_1+x_3+1)\xi_1\xi_2^2+(2-x_1+2x_2)\xi_2^3\\
+(x_1+2x_2+x_3+2)\xi_1^2+(3x_2-x_1)\xi_2^2,
\end{align*}
\[
S = \{\xi\in\mathbb{R}^2|1\le \xi^T\xi\le 4\},\quad
g = (\xi^T \xi -1, \, 4 - \xi^T\xi ),
\]
\[
Y = \left\{
y\in\mathbb{R}^{\mathbb{N}_4^2}\left|
y_{00}=1,\,
\sum_{|\alpha|\ge 1}y_{\alpha}^2=36
\right.
\right\} .
\]
The set $Y$ is not convex. Its convex hull is
$\| y \| \le \sqrt{37}$ with $y_{00} = 1$. Hence,
\[
\overline{cone(Y)} = \left\{y\in\mathbb{R}^{\mathbb{N}_4^2}
\left|\,
\| y \|_2\le \sqrt{37} y_{00}
\right.
\right\}.
\]
The functions $f$ and $-c_1,-c_2$ are not convex. We start with $k=2$.
The optimizers for (\ref{eq:SOS:Qres}) and (\ref{eq:SOS:Qdual}) are respectively
\[
\baray{r}
w^* \approx (
1.0000,0.6790,0.3682,-2.0984,0.4611
,0.2500,-1.4249,0.1356,-0.7726,\\
4.4034,0.3131,0.1698,-0.9675,0.0920
,-0.5246,2.9900,0.0499,-0.2845,\\
1.6212,-9.2402,0.2126,0.1153,-0.6569
,0.0625,-0.3562,2.0302,0.0339,\\
-0.1932,1.1008,-6.2742,0.0184,-0.1047
,0.5969,-3.4021,19.3898),
\\
y^* \approx
(1.2272,0.2992,-1.1902,0.0730,-0.2902
,1.1543,0.0178,-0.0708,0.2814,\\
-1.1194,0.0043,-0.0173,0.0686
,-0.2729,1.0857).
    \earay
\]
The optimal value is $F^* \approx -12.6420$ for both of them.
The measure for achieving $y^* = \int  [\xi]_4 \mt{d} \mu$ is
$\mu = 1.2272 \dt_{ u }$, with $u \approx (0.2438,-0.9698)\in S$.
So $\mu^*=\delta_u$.
For the point \[ x^* = \pi(w^*)\approx(0.6790,0.3682,-2.0984) , \]
one can verify that $x^*$ is feasible for (\ref{eq:nonSOSadv}), since
\[
c_1(x^*)\approx -1.6654\cdot10^{-9}, \,
c_2(x^*)\approx5.6235\cdot10^{-8}, \,
F^*-f(x^*)\approx  -7.7271\cdot10^{-8}.
\]
By Theorem \ref{thm:x*:solveDRO}, we know
$x^*$ is the optimizer for (\ref{eq:nonSOSadv}).
\end{exm}

\begin{exm} \label{exm:Portfolio}
(Portfolio selection \cite{DelageYeDRO,KlerkDROpolydense})
Consider that there exist $n$ risky assets that can be chosen
by the investor in the financial market.
The uncertain loss $r_i$ of each asset can be described
by the random risk variable $\xi$
which admits a probability measure supported in $S=[0,1]^p$.
Assume the moments of $\mu \in \mc{M}$ are constrained in the set
\[
Y=\left\{
y\in\mathbb{R}^{\mathbb{N}_3^3}\left|\,
y_{000}=1,\, 0.1\le y_{\alpha}\le 1,\, |\alpha|\ge 1
\right.
\right\} .
\]
The cone $\overline{cone(Y)}$ can be given as in (\ref{coneY:linear}).
Minimizing the portfolio loss over the ambiguity set $\mathcal{M}$
is equivalent to solving the following min-max optimization problem
\begin{equation}
\label{eq:portfolio_minmax}
\min_{x\in \Delta_3}\max_{\mu\in \mathcal{M}}\, \mathbb{E}_{\mu}\left[x_1r_1(\xi)+x_2r_2(\xi)+x_3r_3(\xi)\right],
\end{equation}
for the simplex
$\Delta_n \coloneqq \left\{x\in\mathbb{R}^3\left| e^Tx=1,\,x\ge 0\right.\right\}$.
The functions $r_i(\xi)$ are
\begin{equation}
\label{eq:def:r}
\left\{
\begin{aligned}
r_1(\xi) &= -1+\xi_1+\xi_1\xi_2-\xi_1\xi_3-2\xi_1^3,\\
r_2(\xi) &= -1-\xi_1\xi_2+\xi_2^2-\xi_2\xi_3+\xi_2^3,\\
r_3(\xi) &= -1+\xi_2\xi_3-\xi_3^2-\xi_3^3.
\end{aligned}
\right.
\end{equation}
Then (\ref{eq:portfolio_minmax}) can be equivalently reformulated as
\begin{equation}
\label{eq:portfolioDROM}
\left\{
\begin{array}{cl}
\min\limits_{(x_0,x)\in\mathbb{R}\times\mathbb{R}^3} & x_0\\
\st & \inf\limits_{\mu\in\mathcal{M}}\mathbb{E}_{\mu}\left[ x_0-\big(x_1r_1(\xi)+x_2r_2(\xi)+x_3r_3(\xi)\big)\right]\ge 0,\\
& x\ge 0,\, e^Tx=1.
\end{array}
\right.
\end{equation}
Applying Algorithm \ref{def:alg} to solve (\ref{eq:portfolioDROM}),
we get the optimal value $F^*$ and the optimizer $(x_0^*,x^*)$
in the initial loop $k=2$:
\[
F^*\approx -1.0136,\quad (x_0^*,x^*)\approx (-1.0136,0.1492,0.3501,0.5007).
\]
The optimizer for (\ref{eq:SOS:Qdual}) is
\[
\baray{r}
y^* \approx (
1.0000,0.6077,0.4440,0.3725,0.3864,0.3347,0.2530,0.4440,0.2666,
0.1803,\\
0.2560,0.2523,0.1771,0.3347,0.2010,0.1306,0.4440,0.2666,0.1601,
0.1000).
\earay
\]
The measure for achieving $y^*= \int [\xi]_4 \mt{d} \mu$ is
\[
\mu \,= \,  0.5560 \dt_{ u_1 } + 0.4440 \dt_{ u_2},
\]
with the following two points in $S$:
\begin{align*}
u_1 & \approx (0.4911,-0.0000,0.1905),\quad
u_2 \approx (0.7538,1.0000,0.6005).
\end{align*}
Since $\mu$ belongs to $\mathcal{M}$, it is also the measure
that achieves the worst case expectation constraint.
Therefore, the optimizer for \reff{eq:portfolio_minmax}
is $x^*$ and the optimal value is $-1.0136$.
\end{exm}

\begin{exm}[Newsvendor problem \cite{WiesemannConvexDRO}]
Consider that there is a newsvendor trade product
with an uncertain daily demand. Assume the demand quantity $D(\xi)$
is affected by a random variable $\xi\in\mathbb{R}^2$ such that
\[
D(\xi) \,=\, 2-\xi_1+\xi_2-\xi_1^2+2\xi_2^2+\xi_1^4.
\]
In each day, the newsvendor orders $x$ units of the product at the wholesale price $P_1$,
sells the product with quantity $\min\{x,D(\xi)\}$ at the retail price $P_2$
and clears the unsold stock at the salvage price $P_0$.
Assume that $P_0<P_1<P_2$, then the newsvendor's daily loss is given as
\[
l(x,\xi) \, \coloneqq  \, (P_1-P_2)x+(P_2-P_0) \cdot \max \{x-D(\xi),0\}.
\]
Clearly, the newsboy will earn the most if he can buy
the greatest order quantity that is guaranteed to be sold out.
Suppose $\xi$ admits a probability measure supported in $S$
and has its true distribution contained in the ambiguity set $\mathcal{M}$.
Then the best order decision for the newsvendor product
can be obtained from the following DROM problem
\begin{equation}
\label{eq:newsvendor}
\left\{
\begin{aligned}
\min_{x\in\mathbb{R}}\quad & (P_1-P_2)x\\
\st\quad & \inf_{\mu\in\mathcal{M}}\mathbb{E}_{\mu}[D(\xi)-x]\ge 0,\\
& x\ge 0.
\end{aligned}
\right.
\end{equation}
Suppose $P_0 = 0.25, P_1 = 0.5, P_2=1$, and
\[
S = [0,5]^2,\quad Y = \left\{y\in\mathbb{R}^{\mathbb{N}_4^2}\left|\,
 \begin{array}{c}
 y_{00}=1,\, 1\le y_{01}\le y_{02}\le 4\\
 2^i\le y_{i0}\le 4^i,\,i = 1, 2, 3, 4
 \end{array}
 \right.
 \right\}.
\]
The cone $\overline{cone(Y)}$ can be given as in (\ref{coneY:linear}).
Applying Algorithm \ref{def:alg} to solve (\ref{eq:newsvendor}),
we get the optimal value $F$ and the optimizer $x^*$ respectively
\[
F^* \approx -7.5000,\quad x^*\approx 15.0000.
\]
The optimizer of \reff{eq:SOS:Qdual} is
\[
\baray{r}
y^* \, \approx \,(0.5000,1.0000,0.5000,2.0000,1.0000,0.5000,4.0000,  2.0000, \\
 1.0000, 0.5000,8.0000,4.0000,2.0000,1.0000,0.5000).
\earay
\]
The measure for achieving $y^* = \int [\xi]_4 \mt{d} \mu$ is
$\mu = 0.5 \dt_{ u }$ with $u = (2.0000,1.0000) \in S$.
So $\mu^*=\delta_u$ achieves the worst case expectation constraint.
\end{exm}

We would like to remark that the ambiguity set $\mc{M}$
can be constructed by samples or historic data.
It can also be updated as the sampling size increases.
Assume the support set $S$ is given and each $\mu\in\mc{M}$ is a probability measure.
The moment ambiguity set $Y$ can be estimated by statistical samplings.
Suppose $T = \{\xi^{(1)},\ldots, \xi^{(N)}\}$ is a given sample set for $\xi$.
One can randomly choose $T_1,\ldots,T_s\subseteq T$
such that each $T_i$ contains $\lceil N/2\rceil$ samples.
Choose a smaller sample size $s$, say, $s = 5$.
For a given degree $d$, choose the moment vectors
$l,\, u\in\re^{\N_d^n}$ such that
\[
l_{\alpha} = \min\limits_{ j = 1,\ldots,s }
\Big\{\frac{1}{|T_j|}\sum\limits_{i\in T_j}(\xi^{(i)})^{\alpha},\,
\frac{1}{|T\setminus T_j|}\sum\limits_{i\in T\setminus T_j}(\xi^{(i)})^{\alpha}
\Big\},
\]
\[
u_{\alpha} = \max\limits_{ j = 1,\ldots,s }
\Big\{\frac{1}{|T_j|}\sum\limits_{i\in T_j}(\xi^{(i)})^{\alpha},\,
\frac{1}{|T\setminus T_j|}\sum\limits_{i\in T\setminus T_j}(\xi^{(i)})^{\alpha}
\Big\}
\]
for every power $\alpha\in\N_d^n$.
The moment constraining set $Y$,
e.g., as in Example~\ref{exm:Portfolio},
can be estimated as
\begin{equation}
\label{eq:Yapp}
Y  = \{ y\in\re^{\N_d^n}: l \le y\le u \}.
\end{equation}
Other types of moment constraining set $Y$ can be estimated similarly.
Suppose each $\xi^{(i)}$ independently follows the distribution of $\xi$.
As the sample size $N$ increases, the moment ambiguity set $\mc{M}$
with $Y$ in \reff{eq:Yapp} is expected to
give a better approximation of the true distribution of $\xi$.
This is indicated by the Law of Large Numbers
and the convergence results of sample average approximations.
The following is an example for how to do this.

\begin{example} \label{ex:extra}
Consider the portfolio selection optimization problem
as in Example~\ref{exm:Portfolio}.
The DROM is \reff{eq:portfolio_minmax},
or equivalently \reff{eq:portfolioDROM}.
Assume each $r_i(\xi)$ is given as in \reff{eq:def:r}.
Suppose $\xi = (\xi_1,\xi_2,\xi_3)$ is the random variable,
where each $\xi_i$ is independently distributed.
Assume $\xi_1$ follows the uniform distribution on $[0,1]$, $\xi_2$
follows the truncated standard normal distribution on $[0,1]$ and $\xi_3$
follows the truncated exponential distribution with the mean value $0.5$ on $[0,1]$.
We use the \texttt{MATLAB} commands \texttt{makedist} and \texttt{truncate}
to generate samples of $\xi$ with the sample size $N\in\{50,100,200\}$,
and then construct $Y$ as in \reff{eq:Yapp} with $s = 5$ and $d = n = 3$.

\noindent
(i). When $N = 50$, we get that
\[\begin{array}{r}
l = (1.0000,0.4354,0.3779,0.3873,0.2757,0.1916,0.1872,
    0.1975,0.1549,0.2018,\\
   0.2027,0.1299,0.1161,0.1111,0.0848,0.1025,0.1193
   ,0.0801,0.0866,0.1207),\\
u = (1.0000,0.5803,0.4606,0.4808,0.3938,
   0.2696,0.2579,0.2838,0.2109,0.3293,\\
   0.2913,0.1870,0.1821,0.1662,0.1091,0.1793,
   0.2027,0.1235,0.1361,0.2560).
\end{array}
\]

\noindent
(ii). For $N = 100$, we get that
\[\begin{array}{r}
l = (1.0000,0.4935,0.3799,0.4135,0.3150,0.1828,
   0.2065,0.1975,0.1745,0.2459,\\
   0.2261,0.1061,0.1268,0.0924,0.0837,0.1280,
   0.1195,0.0926,0.1102,0.1709),\\
u = (1.0000,0.5882,0.4545,0.5182,0.4156,0.2529,
   0.2838,0.2833,0.2294,0.3545,\\
   0.3178,0.1768,0.1941,0.1565,0.1242,0.1844,
   0.2035,0.1451,0.1570,0.2716).
\end{array}
\]

\noindent
(iii). For $N = 200$, we get that
\[\begin{array}{r}
l = (1.0000,0.4803,0.4177,0.4157,0.3170,
   0.1957,0.2253,0.2508,0.1784,0.2580,\\
   0.2310,0.1274,0.1459,0.1170,0.0875,
   0.1348,0.1719,0.0998,0.1048,0.1886),\\
u = (1.0000,0.5647,0.4698,0.5137,0.3939,
   0.2712,0.2738,0.2883,0.2250,0.3387,\\
   0.3097,0.1904,0.1950,0.1662,0.1300,
   0.1889,0.2062,0.1396,0.1510,0.2470).
\end{array}
\]
Applying Algorithm~\ref{def:alg}, we get the optimal value $F^*$
and the optimizer $(x_0^*,x^*)$ in the initial loop $k=2$ for each case.
The computational results are given in Table~\ref{tab:extra}.
Since $x_0^*=F^*$ and $y^*$ admits a measure $\mu = \theta_1\delta_{u_1}+\theta\delta_{u_2}+\theta_3\delta_{u_3}$, we only list $F^*$, $\theta = (\theta_1,\theta_2,\theta_3)$ and $u_1,u_2,u_3$ for convenience.
\begin{table}[htb!]
\caption{Computational results for Example~\ref{ex:extra}}
\label{tab:extra}
\begin{tabular}{|c|c|c|c|c|}
\hline
Case & $F^*$ & $x^*$ & $\theta$ & $u_1,\, u_2,\,u_3$\\
\hline
(i)  & $-0.9711$ & $\bbm 0.0159\\0.2824\\0.7017\ebm$ &
$\bbm0.0746\\0.7297\\0.1957\ebm$ & $\bbm 0.0000\\
0.4555\\0.1266\ebm,\bbm 0.5675\\0.0000\\0.1074\ebm,
\bbm 0.8492\\1.0000\\0.3832\ebm$ \\ \hline

(ii) & $-0.9743$ & $\bbm 0.0163\\0.2711\\0.7125\ebm$ &
$\bbm 0.0749\\0.7320\\0.1931\ebm$ &
	$\bbm 0.0000\\0.5170\\0.1563\ebm,
\bbm0.5898\\0.0000\\0.1103\ebm,\bbm 0.8103\\1.0000\\0.4343\ebm$ \\ \hline

(iii) & $-0.9749$ & $\bbm 0.0171\\0.2829\\0.6999\ebm$ &
$\bbm 0.0937\\0.7096\\0.1967\ebm$ &
$\bbm 0.0000\\0.4666\\0.1370\ebm,
\bbm 0.5616\\0.0000\\0.1122\ebm,
\bbm 0.8450\\1.0000\\0.4474\ebm$\\
\hline
\end{tabular}
\end{table}
As the sample size increases, the optimal value of $F^*$ improves.
This indicates that the ambiguity set
can be estimated by sampling averages
and the accuracy increases as the sampling size increases.
\end{example}

\section{Conclusions and discussions}
\label{sec:con}

This paper studies distributionally robust optimization when the ambiguity set
is given by moment constraints.
The DROM has a deterministic objective,
some constraints on the decision variable and a worst case expectation constraint.
The distributionally robust min-max optimization is a special case of DROM.
The objective and constraints are assumed to be
polynomial functions in the decision variable.
Under the SOS-convexity assumption, we show that the DROM
is equivalent to a linear conic optimization problem with moment constraints,
as well as the psd polynomial conic condition.
The Moment-SOS relaxation method (i.e., Algorithm~\ref{def:alg})
is proposed to solve the linear conic optimization.
The method can deal with moments of any order.
Moreover, it not only returns the optimal value and optimizers for the original DROM,
but also gives the measure that achieves the worst case expectation constraint.
Under some general assumptions (e.g., the archimedeanness),
we proved the asymptotic and finite convergence of the proposed method
(see Theorems~\ref{thm:UTMPconv}, \ref{thm:asymptotic} and \ref{thm:finite}).
Numerical examples, as well as some applications,
are given to show how it solves DROM problems.

The distributionally robust optimization is attracting broad interests
in various applications. There is much future work to do.
In this paper, we assumed the random function $h(x, \xi)$
is linear in the decision variable $x$.
How can we solve the DROM if $h(x, \xi)$ is not linear in $x$?
To prove the DROM \reff{md:polyDRPO} is equivalent to
the linear conic optimization \reff{eq:SOS:inirel},
we assumed the objective and constraints are SOS-convex.
When they are not SOS-convex,
how can we get equivalent linear conic optimization for \reff{md:polyDRPO}?
They are important future work.

\bigskip \noindent
{\bf Competing Interests} \,
The authors have no relevant financial or non-financial interests to disclose.

\bigskip \noindent
{\bf Author Contribution} \,
The authors have done analysis and computational results.

\medskip \noindent
{\bf Data Availability}
The paper does not analyse or generate any datasets, because the work proceeds
within a theoretical and mathematical approach.

\end{document}